\newtheorem{thm}{Theorem}[section]
 \newtheorem{lem}[thm]{Lemma}
 \newtheorem{prop}[thm]{Proposition}
 \newtheorem{defn}[thm]{Definition}
 \newtheorem{rem}[thm]{Remark}
    \DeclareMathOperator\supp{supp}
	\DeclareMathOperator\meas{meas}
\journal{Journal of \LaTeX\ Templates}
\begin{document}

\begin{frontmatter}

\title{Lifespan of semilinear wave equation with scale invariant dissipation and mass and sub-Strauss power nonlinearity}

% Group authors per affiliation:
%\author{
%Ziheng Tu
%\footnote{School of Data Science, Zhejiang University of Finance and Economics, 310018, Hangzhou, P.R.China.
% e-mail: tuziheng@zufe.edu.cn.}
% \quad
%Alessandro Palmieri
%\footnote{Institute of Applied Analysis, Technical University Bergakademie Freiberg,
%Pr\"{u}ferstra{\ss}e 9, 09596, Freiberg, Germany. e-mail: alessandro.palmieri.math@gmail.com}
%}
\author{Alessandro Palmieri
\footnote{Institute of Applied Analysis, Technical University Bergakademie Freiberg,
Pr\"{u}ferstra{\ss}e 9, 09596, Freiberg, Germany. e-mail: alessandro.palmieri.math@gmail.com}
 \quad
Ziheng Tu
\footnote{School of Data Science, Zhejiang University of Finance and Economics, 310018, Hangzhou, P.R.China.
 e-mail: tuziheng@zufe.edu.cn.}
}

\begin{abstract}

In this paper, we study the blow-up of solutions for semilinear wave equations with scale invariant dissipation and mass in the case in which the model is somehow ``wave-like''. A Strauss type critical exponent is determined as the upper bound for the exponent in the nonlinearity in the main theorems. Two blow-up results are obtained for the sub-critical case and for the critical case, respectively. In both cases, an upper bound lifespan estimate is given.

\end{abstract}

\begin{keyword}
Semilinear wave equation; Strauss exponent; Blow-up; Lifespan.

\MSC[2010] Primary 35L71; Secondary 35B44, 35L05

\end{keyword}

\end{frontmatter}

\section{Introduction and main results}
In present paper, we consider  the  following hyperbolic model
\begin{align}
u_{tt}-\Delta u+\frac{\mu_1}{1+t}u_t+\frac{\mu_2^2}{(1+t)^2}u & =|u|^p  \qquad  \ \  (x,t)\ \in\ \mathbb{R}^n\times[0,\infty), \notag \\
u(0,x)& =\varepsilon \, f(x) \qquad x\ \in\ \mathbb{R}^n, \label{main}\\
u_t(0,x)& =\varepsilon \, g(x) \qquad \, x\ \in\ \mathbb{R}^n. \notag
\end{align}
where $\mu_1,\ \mu^2_2\geqslant 0$ and $\varepsilon>0$ is a  parameter that describes the smallness of initial data. %, $f,\ g\in C_0^{\infty}(\mathbb{R}^n)$ and $n\in N$. We assume that $\varepsilon>0$ is a "small" parameter.
%One important feature of this model is its scale invariant property as both damping term and mass term share the hyperbolic scaling,
The time-dependent coefficients for the damping and for the mass term are chosen in order to have for the corresponding linear equation 
\begin{equation}\label{lin eq scal inv}
u_{tt}-\Delta u+\frac{\mu_1}{1+t}u_t+\frac{\mu_2^2}{(1+t)^2}u=0 
\end{equation}  a scaling property. More precisely, \eqref{lin eq scal inv} is invariant with respect to the so-called hyperbolic transformation
$$\widetilde{u}(t,x)=u(\lambda(1+t)-1,\lambda x) \quad \mbox{with} \ \lambda>0.$$ 

In the last years, \eqref{main}  has been studied in \cite{NPR16,Pal17,PalRei17,PR17vs,DabbPal18,Pal18odd,Pal18even}.

It turns out that the quantity $$\delta:= (\mu_1-1)^2-4\mu_2^2$$ describes the interplay between the damping and the mass term in \eqref{main}. For further considerations on this interplay cf. \cite{NPR16,PR17vs,DabbPal18}.

 Combining the results from \cite{NPR16,Pal17}, it follows that the shift of the Fujita exponent $p_F\Big(n+\tfrac{\mu_1-1-\sqrt{\delta}}2\Big)$ is the critical exponent for \eqref{main} in the case $\delta\geqslant (n+1)^2$, where $p_F(n):=1+\frac{2}{n}$. Therefore, \eqref{main} is ``parabolic-like'' from the point of view of the critical exponent for ``large'' $\delta$. 
 On the other hand, in \cite{PR17vs} it has been proved a blow-up result for $\delta\in (0,1]$ provided that $$1<p\leqslant \max \Big\{p_S(n+\mu_1), p_F\Big(n+\tfrac{\mu_1-1-\sqrt{\delta}}2\Big)\Big\}$$ with the exception of the critical case $p=p_S(n+\mu_1)$ in dimension $n=1$. In the preceding condition $p_S(r)$ denotes the so-called Strauss exponent, that is, the positive root of the quadratic equation 
\begin{equation} \label{qua}
 \gamma(p,r):= 2+(r+1)p-(r-1)p^2= 0 \ \ \mbox{for} \ r>1.
\end{equation} 

Briefly, in \cite{PR17vs} a suitable change of variables allows transforming \eqref{main} in a semilinear wave equation with time-dependent speed of propagation. Hence, a suitable test function, involving the modified Bessel function of the second kind, and Kato's lemma are used. Consequently, we see that for small and positive $\delta$, using the same jargon as before, \eqref{main} seems to be ``wave-like'', at least concerning blow-up results.

The goal of this paper is to enlarge the range of $\delta$ for which a blow-up result can be proved for  $1<p\leqslant p_S(n+\mu_1)$. Furthermore, upper bound estimates for the lifespan of the local (in time) solution of \eqref{main} are derived. 

In the sub-critical case we combine the approach from \cite{YZ06}, in order to determine a lower bound for the integral with respect to spatial variables of the nonlinearity, and an iteration method introduced in \cite{Joh79} for the semilinear free wave equation in dimension $n=3$ and very recently applied to several different models (see \cite{LST18, LT18Scatt,LT18ComNon,TL1709}, for example). 

In the critical case, we adapt the approach of \cite{IS17}, which is based in turn on that one of \cite{ZH14}, in order to include the scale-invariant mass term.

We briefly recall some related background concerning model \eqref{main}. When $\mu_1=\mu_2=0$, this model reduces to the classic semilinear wave equation. In this case, the Strauss exponent  $p_S(n)$ is known to be the critical exponent. We refer to the classical works \cite{Joh79,Gla81-g,Zhou95,LS96,GLS97} for small data global existence results when $p> p_S(n)$, and
\cite{Joh79,Gla81-b,Sid84,Sch85,YZ06,Zhou07} for the blow-up results when $1< p\leqslant p_S(n)$.

When $\mu_2=0$, model \eqref{main} is reduced to the scale invariant damping wave equation which has drawn more and more attention recently. As mentioned in \cite{Wirt04}, such type damping is a threshold betweeen "effective" and "non-effective" dampings. Moreover, the size of $\mu_1$ plays an important role in determining the solution behavior type. In \cite{Abb15,Wakasugi14} it is proved that  $p_F(n)$ is critical for sufficiently large $\mu_1$, while for  $\mu_1<\mu^*:=\frac{n^2+n+2}{n+2}$ in \cite{DLR15,LTW17,IS17,TL1709,TL1711} several blow-up results are given for $p\leqslant  p_S(n+\mu_1)$.
We note that $\mu^*$ satisfies the identity $p_F(n)=p_S(n+\mu^*)$. In particular, in \cite{TL1711} a different test function from that of \cite{IS17} is used in the critical case.  Finally, some global (in time) existence results of small data solutions are proved  for $\mu_1=2$ in \cite{DLR15,DabbLuc15}.

We state now the main results of this paper.
According to \cite{LTW17}, we introduce a notion of energy solution in the following way.
\begin{defn} Let $f\in H^1(\mathbb{R}^n)$ and $g\in L^2(\mathbb{R}^n)$.
We say that $u$ is an energy solution of \eqref{main} on $[0,T)$ if
$$u\in C([0,T),H^1(\mathbb{R}^n))\cap C^1([0,T),L^2(\mathbb{R}^n))\cap L^p_{loc}(\mathbb{R}^n\times[0,T))$$
satisfies
%\begin{equation}  \label{def}
%\begin{split}
%&\int_{\mathbb{R}^n}u_t(t,x)\phi(t,x)\, dx-\int_{\mathbb{R}^n}u_t(0,x)\phi(0,x)\,dx\\
%& \quad +\int_0^t \int_{\mathbb{R}^n}\big(-u_t(s,x)\phi_t(s,x)+\nabla u(s,x)\cdot\nabla\phi(s,x)\big)\, dx\, ds\\
%& \quad+\int_0^t\int_{\mathbb{R}^n}\left(\frac{\mu_1 }{1+s} u_t(s,x)+\frac{\mu_2^2}{(1+s)^2}u(s,x)\right)\phi(s,x)\, dx \, ds \\
%& =\int_0^t\int_{\mathbb{R}^n}|u(s,x)|^p\phi(s,x)\,dx \, ds
%\end{split}
%\end{equation}
\begin{align} 
&\int_{\mathbb{R}^n}u_t(t,x)\phi(t,x)\,dx-\int_{\mathbb{R}^n}u_t(0,x)\phi(0,x)\,dx - \int_0^t\int_{\mathbb{R}^n} u_t(s,x)\phi_t(s,x) \,dx\, ds\notag \\
& \ +\int_0^t\int_{\mathbb{R}^n}\nabla u(s,x)\cdot\nabla\phi(s,x)\, dx\, ds+\int_0^t\int_{\mathbb{R}^n}\left(\frac{\mu_1 }{1+s} u_t(s,x)+\frac{\mu_2^2}{(1+s)^2}u(s,x)\right)\phi(s,x)\,dx\, ds  \notag \\
& =\int_0^t \int_{\mathbb{R}^n}|u(s,x)|^p\phi(s,x)\,dx \, ds  \label{def}
\end{align}
for any $\phi\in C_0^\infty([0,T)\times\mathbb{R}^n)$ and any $t\in [0,T)$.
\end{defn}
 After a further integration by parts in \eqref{def}, letting $t\rightarrow T$, we find that $u$ fulfills the definition of weak solution of \eqref{main}.
 
Our main results are the following two theorems, where we study the sub-critical case and the critical case, respectively.

\begin{thm}%[Sub-critical case]
\label{sub-critical}
Let $n\geqslant 1$ and let $\mu_1, \mu_2^2$ be nonnegative constants such that $\delta\geqslant 0$. Let us consider $p$ satisfying
$1< p <p_S(n+\mu_1).$

Assume that $f\in\ H^1(\mathbb{R}^n)$ and $g\in L^2(\mathbb{R}^n)$ are compactly supported in $B_R:=\{x\in\mathbb{R}^n : |x|\leqslant R\}$ and 
\begin{equation}\label{ini-c1}
f(x)\geqslant 0\ \mbox{and} \ \ g(x)+\tfrac{\mu_1-1-\sqrt{\delta}}{2}f(x)\geqslant 0.
\end{equation}
Let $u$ be an energy solution of \eqref{main} with lifespan $T=T(\varepsilon).$
Then, there exists a constant $\varepsilon_0=\varepsilon_0(f,g,n,p,\mu_1,\mu_2^2,R)$ such that $T(\varepsilon)$ fulfills
$$T(\varepsilon)\leqslant C\varepsilon ^{-2p(p-1)/\gamma(p,n+\mu_1)}$$
for any $0<\varepsilon\leqslant\varepsilon_0$, where $C$ is a positive constant independent of $\varepsilon$.
\end{thm}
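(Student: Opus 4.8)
The plan is to combine a change of variables that turns \eqref{main} into a semilinear wave equation with time-dependent propagation speed with a Kato-type iteration argument for a weighted spatial integral of the solution. First I would introduce the dissipative transformation $u(t,x) = (1+t)^{-(\mu_1-1)/2} v(t,x)$, or rather the combined change of unknown and of the time variable used in \cite{PR17vs}, which absorbs the damping and mass terms into a factor and produces, after rescaling time, a wave equation of the form $v_{tt} - c(t)^2 \Delta v = c(t)^\beta |v|^p$ (up to a suitable power of the new time variable), where the exponent bookkeeping will produce precisely $\gamma(p,n+\mu_1)$ in the final estimate. I would then define the functional $F(t) := \int_{\mathbb{R}^n} u(t,x)\,dx$ (or the corresponding one in the new variables), and, using the sign conditions \eqref{ini-c1} together with the finite speed of propagation (so that the spatial support stays in a ball of radius $R + $ (a function of $t$)), derive from the weak formulation \eqref{def} a second-order differential inequality for $F$.

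The next step is the lower bound for $\int |u|^p\,dx$ in terms of $F(t)$. Here I would follow \cite{YZ06}: by Hölder's inequality on the support ball, $\int_{B_{R+\psi(t)}} |u|\,dx \le \big(\int |u|^p\,dx\big)^{1/p} \,|B_{R+\psi(t)}|^{1-1/p}$, so that
\[
\int_{\mathbb{R}^n} |u(t,x)|^p\,dx \;\gtrsim\; (R+\psi(t))^{-n(p-1)} \, |F(t)|^p .
\]
Combining this with the differential inequality for $F$ and with an a~priori lower bound $F(t) \gtrsim \varepsilon$ coming from the sign assumptions (a lower bound showing $F$ stays bounded below by a positive multiple of $\varepsilon$ on a suitable initial interval, obtained by testing the equation and using $f,g\ge 0$ in the sense of \eqref{ini-c1}), I get a closed nonlinear integral inequality. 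Then I would set up the iteration scheme of \cite{Joh79} (in the form used in \cite{LST18,LT18Scatt,LT18ComNon,TL1709}): assume a lower bound $F(t) \ge C_j (t-\cdot)^{a_j} t^{-b_j}$ on a time interval, plug it into the integral inequality, and deduce an improved bound $F(t)\ge C_{j+1}(t-\cdot)^{a_{j+1}} t^{-b_{j+1}}$, tracking the recursions for $a_j, b_j$ and especially for the constants $C_j$.

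The main obstacle — and the heart of the argument — is showing that the sequence of constants $C_j$ grows doubly exponentially in $j$ while the polynomial exponents $a_j, b_j$ stay controlled, so that for $t$ beyond a threshold of order $\varepsilon^{-2p(p-1)/\gamma(p,n+\mu_1)}$ the lower bound for $F(t)$ blows up, forcing $T(\varepsilon)$ to be finite and no larger than that threshold. Concretely one shows $\log C_{j+1} \ge p\log C_j - D(j+1) $ for a constant $D$, whence $\log C_j \ge p^j(\log C_0 - E)$ for some $E>0$; the condition $\log C_0 - E > 0$, i.e. the requirement that the first-generation constant (which carries the $\varepsilon$-dependence and the factor $(R+\psi(t))^{-n(p-1)}$ evaluated at the threshold time) be large enough, is exactly where the smallness assumption $\varepsilon \le \varepsilon_0$ and the explicit form of the lifespan bound enter. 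The subcriticality $p < p_S(n+\mu_1)$, equivalently $\gamma(p,n+\mu_1) > 0$, is what makes the relevant exponent positive so that this threshold is finite; the borderline $\gamma = 0$ is excluded here and handled separately in the critical theorem. A secondary technical point to be careful about is that the time-dependent speed $c(t)$ from the change of variables makes the light cone radius $\psi(t)$ grow like a power of $t$ rather than linearly, so the powers of $(R+\psi(t))$ must be tracked accurately through the iteration; but this only affects the exponents $a_j,b_j$ and the value of the constant $C$, not the structure of the doubly-exponential blow-up of $C_j$.
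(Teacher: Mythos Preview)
Your proposal contains a genuine gap: the lower bound you derive for $\int_{\mathbb{R}^n}|u(t,x)|^p\,dx$ is too weak to reach the Strauss threshold. What you describe as ``following \cite{YZ06}'' is in fact only the elementary H\"older estimate on the support ball,
\[
\int_{\mathbb{R}^n}|u|^p\,dx \;\gtrsim\; (R+t)^{-n(p-1)}\,|F(t)|^p,
\]
together with an a priori bound $F(t)\gtrsim \varepsilon$. Feeding this pair into the iteration scheme produces a first iterate whose exponents, once run through the recursion, yield the condition $\beta-\alpha>0$ only in a strictly smaller range than $p<p_S(n+\mu_1)$; in the undamped case $\mu_1=\mu_2=0$ this is precisely the classical Kato range $p<\tfrac{n+1}{n-1}$ (or the Fujita range $p<1+\tfrac{2}{n}$ if one starts merely from $F\gtrsim\varepsilon$ rather than $F\gtrsim\varepsilon t$), both of which lie strictly below $p_S(n)$ for $n\geqslant 2$. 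The change of variables from \cite{PR17vs} does not rescue this: that paper, too, requires a test function involving the modified Bessel function in addition to Kato's lemma.

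The missing ingredient---and the actual content of \cite{YZ06}---is a second, weighted functional. The paper builds
\[
\psi(t,x)=\lambda(t)\varphi(x), \qquad \lambda(t)=(1+t)^{\frac{\mu_1+1}{2}}K_{\frac{\sqrt{\delta}}{2}}(1+t),\quad \varphi(x)=\int_{\mathbb{S}^{n-1}}e^{x\cdot\omega}d\omega,
\]
so that $\psi$ solves the adjoint equation, and shows that $F_1(t):=\int u\,\psi\,dx$ satisfies a first-order differential inequality forcing $F_1(t)\gtrsim\varepsilon$. Combining this with H\"older against $\psi^{p'}$ (not against the indicator of the ball) and the asymptotics $K_\nu(t)\sim\sqrt{\pi/(2t)}e^{-t}$ yields the sharp lower bound
\[
\int_{\mathbb{R}^n}|u(t,x)|^p\,dx \;\geqslant\; C_1\varepsilon^p(1+t)^{\,n-1-\frac{n+\mu_1-1}{2}p},
\]
which is better than yours by a factor $(1+t)^{\frac{(n+1-\mu_1)p}{2}-1}$. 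It is this improved first iterate, plugged into the integral inequality for $G(t)=\int u\,dx$ (obtained directly from the equation by factoring the second-order operator via the roots $r_1,r_2$ of $r^2-(\mu_1-1)r+\mu_2^2=0$, without any change of variables), that produces exactly $\beta-\alpha=\tfrac{\gamma(p,n+\mu_1)}{2(p-1)}$ and hence the stated lifespan bound. Without the Bessel/Yordanov--Zhang test function your iteration cannot close for the full range $1<p<p_S(n+\mu_1)$.
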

%\textcolor{red}{
%\begin{thm}[Critical case]\label{critical}
%For the Cauchy problem \eqref{main} with space dimension $n\geqslant2$, $p=p_S(n+\mu_1)$ and $0<\delta<\delta_1$,
%$$\delta_1:=(\frac{3n+\mu_1+1-\sqrt{(3n+\mu_1+1)^2-4[2n^2+(n-2)(\mu_1-1)]}}{2})^2.$$
%Let initial values $f,\ g$ satisfying \eqref{ini-c1} with compact support in $\{|x|<r_0\}$ with $r_0<1$. Suppose that a solution $u$ of \eqref{main} satisfies:
%$$supp\ u\subset\{(x,t)\in \mathbb{R}^n\times [0,T): |x|\leqslant t+r_0\}.$$
%Then lifespan $T<\infty$ and there exists a positive constant $C$ which is independent of $\varepsilon$ such that
%$$T(\varepsilon)\leqslant \exp(C\varepsilon^{-p(p-1)}).$$
%\end{thm}}

\begin{thm}\label{thm critical case} Let $n\geqslant 1$ and let $\mu_1, \, \mu^2_2$ be nonnegative constants such that $0\leqslant \delta <n^2$. Let us consider $p=p_0(n+\mu_1)$. Furthermore, we assume $p>\frac{2}{n-\sqrt{\delta}}$. 

Let $f\in\ H^1(\mathbb{R}^n)$ and $g\in L^2(\mathbb{R}^n$  be nonnegative, not identically zero and  compactly supported in $B_R$ for some $R<1$.

Let us consider  an energy solution $u$ of \eqref{main} with lifespan $T=T(\varepsilon).$ %$$u \in \mathcal{C}([0,T),H^1(\mathbb{R}^n))\cap \mathcal{C}^1([0,T),L^2(\mathbb{R}^n))\cap L^p_{\loc}([0,T)\times \mathbb{R}^n) $$
  Then, there exists $\varepsilon_0=\varepsilon_0(f,g,n,p,\mu_1,\mu_2^2,R)>0$ such that for any $0<\varepsilon\leqslant \varepsilon_0 $ the solution $u$ blows up in finite time. Furthermore, it holds the following upper bound estimate for the lifespan $T=T(\varepsilon)$ of $u$:
\begin{align}\label{upper bound lifespan}
T(\varepsilon)\leqslant \exp(C\varepsilon^{-p(p-1)})
\end{align}  for some constant $C$ which is independent of $\varepsilon$. 
\end{thm}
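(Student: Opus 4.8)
\medskip
\noindent The plan is to run the iteration scheme based on test functions introduced by Zhou--Han \cite{ZH14} and streamlined by Ikeda--Sobajima \cite{IS17}, the genuinely new ingredient being a time weight -- built from the modified Bessel function $K_{\sqrt{\delta}/2}$ -- which simultaneously absorbs the scale-invariant damping \emph{and} the mass term. Write $p=p_S(n+\mu_1)$ for the critical exponent, i.e.\ $\gamma(p,n+\mu_1)=0$ in the notation of \eqref{qua}, equivalently $\tfrac{(n+\mu_1-1)(p-1)}{2}=1+\tfrac1p$. The spatial building block is $\varphi(x):=\int_{S^{n-1}}e^{x\cdot\omega}\,d\omega$, which is positive, satisfies $\Delta\varphi=\varphi$ and $\varphi(x)\sim c_n|x|^{-(n-1)/2}e^{|x|}$ as $|x|\to\infty$. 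One checks that, for any frequency $\lambda>0$, the function $(t,x)\mapsto\rho_\lambda(t)\,\varphi(\lambda x)$ solves the homogeneous adjoint linear equation $\psi_{tt}-\Delta\psi-\partial_t(\tfrac{\mu_1}{1+t}\psi)+\tfrac{\mu_2^2}{(1+t)^2}\psi=0$ if and only if $\rho_\lambda$ solves $\rho_\lambda''-\tfrac{\mu_1}{1+t}\rho_\lambda'+(\tfrac{\mu_1+\mu_2^2}{(1+t)^2}-\lambda^2)\rho_\lambda=0$; the substitution $\rho_\lambda(t)=(1+t)^{(\mu_1+1)/2}W(\lambda(1+t))$ turns this into the modified Bessel equation of order $\nu=\sqrt\delta/2$ for $W$, whose decaying solution gives $\rho_\lambda(t)=(1+t)^{(\mu_1+1)/2}K_{\sqrt\delta/2}(\lambda(1+t))>0$, with $\rho_\lambda(t)\sim c_\lambda(1+t)^{\mu_1/2}e^{-\lambda(1+t)}$ as $t\to+\infty$. (That $\nu$ is real is exactly the hypothesis $\delta\geqslant0$.) For the subcritical Theorem \ref{sub-critical} the single profile $\rho_1(t)\varphi(x)$ would be enough; at the critical power its decay is too fast, and following \cite{ZH14,IS17} one must instead use a suitably weighted superposition $\Phi(t,x)=\int_1^\infty\rho_\lambda(t)\,\varphi(\lambda x)\,d\mu(\lambda)$, still a solution of the adjoint homogeneous equation, which decays only polynomially inside the light cone; the hypotheses $0\leqslant\delta<n^2$ and $p>\tfrac{2}{n-\sqrt\delta}$ are exactly what make the defining integral of $\Phi$, and the weighted integral appearing in Step~3 below, converge with the expected rate.

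Since the principal part is the wave operator, finite propagation speed gives $\supp u(t,\cdot)\subseteq B_{t+R}$, so $\Phi$ may be inserted into the weak formulation \eqref{def} after a harmless spatial truncation. Pairing $u$ with $\Phi$ and integrating by parts -- Green's identity for the linear operator $L$ and its formal adjoint, using $L^{*}\Phi=0$ -- yields, for $F(t):=\int_{\mathbb{R}^n}u(t,x)\,\Phi(t,x)\,dx$, the identity
\[
\int_{\mathbb{R}^n}\Big(\Phi\,u_t-u\,\Phi_t+\tfrac{\mu_1}{1+t}\,u\,\Phi\Big)(t,x)\,dx=\varepsilon\,C_{f,g}+\int_0^t\!\!\int_{\mathbb{R}^n}|u(s,x)|^p\Phi(s,x)\,dx\,ds ,
\]
where the data contribution $C_{f,g}$ is strictly positive because $f,g\geqslant0$, $f\not\equiv0$, $\Phi(0,\cdot)>0$, and $R<1$ -- this is exactly where the ``specially localized'' hypothesis $R<1$ is used, to fix the sign of the term carrying $\partial_t\Phi|_{t=0}$. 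Exploiting this identity (the relevant integrating factor, which undoes the damping, grows exponentially in $t$) one obtains, on the one hand, the crude bound $F(t)\gtrsim\varepsilon$ for all $t\in[0,T(\varepsilon))$ and, on the other hand, a reinforced one of the form $F(t)\gtrsim\varepsilon+\int_0^{t-1}\!\!\int_{\mathbb{R}^n}|u|^p\Phi\,dx\,ds$.

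Step~3 is a H\"older inequality: since $\supp u(t,\cdot)\subseteq B_{t+R}$,
\[
F(t)\ \leqslant\ \Big(\int_{\mathbb{R}^n}|u(t,x)|^p\Phi(t,x)\,dx\Big)^{1/p}\Big(\int_{B_{t+R}}\Phi(t,x)\,dx\Big)^{(p-1)/p} .
\]
Using the asymptotics of $\varphi$ and of $K_{\sqrt\delta/2}$ and the criticality $\gamma(p,n+\mu_1)=0$, one shows that the weight integral satisfies the borderline estimate $\int_{B_{t+R}}\Phi(t,x)\,dx\lesssim(1+t)^{1/(p-1)}(\log(1+t))^{1/p}$, the logarithm being the fingerprint of the critical exponent. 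Writing $G(t):=\int_0^t\!\int_{\mathbb{R}^n}|u|^p\Phi\,dx\,ds$ and combining with the reinforced lower bound of Step~2, one arrives at the closed differential inequality
\[
G'(t)\ \gtrsim\ \frac{G(t-1)^p}{(1+t)\,(\log(1+t))^{(p-1)/p}} ,\qquad G(t)\ \gtrsim\ \varepsilon^{p}\,(\log(1+t))^{1/p}\quad(t\geqslant T_0) ,
\]
the second bound coming from one integration against the trivial estimate $F\gtrsim\varepsilon$. In the slow variable $L:=\log(1+t)$ the first inequality becomes $\tfrac{d}{dL}G\gtrsim L^{-(p-1)/p}G^p$, up to the harmless delay by one time unit.

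Finally one runs the slicing iteration of \cite{ZH14}: by induction one establishes a chain of lower bounds $G(t)\gtrsim D_j\,(L-L_0)^{\sigma_j}$ for $L\geqslant2L_0$, with exponents obeying $\sigma_{j+1}=p\,\sigma_j+\tfrac1p$ -- so that $\sigma_j+\tfrac{1}{p(p-1)}=\tfrac{p^{\,j-1}}{p-1}\to+\infty$ -- and constants obeying $\log D_{j+1}=p\log D_j-\log(c\,\sigma_{j+1})$ with $D_1\gtrsim\varepsilon^{p}$, so that the sequence $p^{-j}\log D_j$ converges. Letting $j\to+\infty$ forces $G(L)=+\infty$ unless $c'\varepsilon^{p}(L-L_0)^{1/(p-1)}\leqslant1$, that is, unless $L\leqslant C\varepsilon^{-p(p-1)}$; since $G(t)<+\infty$ for $t<T(\varepsilon)$, this yields at once the blow-up of $u$ and the lifespan bound $T(\varepsilon)\leqslant\exp(C\varepsilon^{-p(p-1)})$. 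I expect the main difficulties to be of two kinds. First, the bookkeeping of the iteration: choosing the slices and tracking the constants $D_j$ so that the limit delivers \emph{exactly} the power $\varepsilon^{-p(p-1)}$; this is, in essence, the content of \cite{ZH14}. Second, and this is what is genuinely new with respect to \cite{IS17} (where $\mu_2=0$): constructing the Bessel-adapted superposition $\Phi$, checking that it stays a solution of the adjoint homogeneous equation once the scale-invariant mass is switched on, verifying the positivity of $C_{f,g}$ under the sole assumption $R<1$, and confirming that the borderline weight estimate of Step~3 persists precisely under $0\leqslant\delta<n^2$ and $p>\tfrac{2}{n-\sqrt\delta}$.
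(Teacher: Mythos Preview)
Your outline and the paper's proof take genuinely different routes. The paper does \emph{not} build its critical test function as a Bessel-weighted superposition $\int_1^\infty \rho_\lambda(t)\varphi(\lambda x)\,d\mu(\lambda)$; instead it constructs a one-parameter family $\Phi_\beta(t,x)=(1+t)^{-\beta+1}F\bigl(a,b;\tfrac{n}{2};\tfrac{|x|^2}{(1+t)^2}\bigr)$ directly from the Gauss hypergeometric function, choosing $a,b$ so that $\Phi_\beta$ solves the adjoint linear equation in the cone $Q_1$. Two values $\beta_{p+\sigma}>\beta_p=\tfrac{n-\mu_1+1}{2}-\tfrac{1}{p}$ are used: the larger one produces a clean lower bound $\mathcal{H}'_{\beta_p}(t)\gtrsim\varepsilon^p(1+t)$, while $\beta_p$ itself yields the borderline estimate carrying the logarithm. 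The resulting inequality is then closed not by an iteration but by a change of variable $\tau=\log(2+t)$ and a direct appeal to the ODE comparison principle of \cite[Lemma~3.1(ii)]{IS17}. This is in fact exactly the scheme of \cite{IS17}; your attribution of a superposition-of-exponentials construction to that paper is inaccurate.

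That said, your proposed route --- a Bessel superposition together with a Zhou--Han slicing iteration --- is a recognizable alternative strategy in the literature (closer in spirit to \cite{TL1711} or to Wakasa--Yordanov--type arguments), and it is plausible that it can be made to work here. But as written it has real gaps: you never specify the measure $d\mu(\lambda)$, you assert the crucial weight bound $\int_{B_{t+R}}\Phi\,dx\lesssim(1+t)^{1/(p-1)}(\log(1+t))^{1/p}$ without any indication of why the exponents land there or how $\delta<n^2$ and $p>2/(n-\sqrt\delta)$ enter, and the passage from $F\gtrsim\varepsilon+G(t-1)$ to the displayed differential inequality for $G$ is opaque (the time shift by $1$ and the exponent $(p-1)/p$ on the logarithm both look off). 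The advantage of the paper's hypergeometric approach is that the asymptotics of $\psi_\beta$ and $\psi'_\beta$ near $z=1$ are read off from standard formulas, the role of the parameter restrictions is transparent (they place $\beta_p$ in the admissible interval), and the endgame is a one-line citation rather than an iteration to track.
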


%The rest of the paper is arranged as follows. In Section 2, we construct two test functions that will be suitably employed in \eqref{def} in order to prove Theorems \ref{sub-critical} and \ref{critical}. Furthermore, a lower bound for the $p$ norm of the solution of \eqref{main} is derived. This lower bound will play in turn a fundamental role in obtaining lower bounds for some time-dependent functionals that we will consider in the proofs. In Section 3 and Section 4, we discuss the sub-critical and critical case respectively.

The remaining part of the paper is arranged as follows. In Section \ref{Section test func subcrit}, we construct the test function that will be employed in the proof of Theorem \ref{sub-critical}. Furthermore, a lower bound for the $p$ norm of the solution of \eqref{main} is derived. This lower bound will play in turn a fundamental role in the derivation of the lower bound for the time-dependent functional that we will consider in the proof of Theorem \ref{sub-critical}. Similarly, in Section \ref{Section test func crit} we deal with the construction of a different test function, involving Gauss hypergeometric function, and we prove some preliminary results to the proof of Theorem \ref{thm critical case}.  In Sections \ref{Section proof thm subcrit} and  \ref{Section proof thm crit}, we provide the proofs of Theorems \ref{sub-critical} and \ref{thm critical case}, respectively.

\section{Test function and Preliminaries: subcritical case} \label{Section test func subcrit}

Before starting with the construction of the test functions, we recall the definition of the modified Bessel function of the second kind of order $\nu$
$$K_{\nu}(t)=\int_0^{\infty}\exp(-t\cosh z)\cosh(\nu z)dz,\ \ \nu\in \mathbb{R}$$
which is a solution of the equation
$$\bigg(t^2\frac{d^2}{dt^2}+t\frac{d}{dt}-(t^2+\nu^2)\bigg)K_{\nu}(t)=0, \ \ t>0.$$
We collect some important properties concerning $K_\nu(t)$ in the case in which $\nu$ is a real parameter. Interested reader may refer to \cite{ Erdelyi}.

\begin{itemize}
%\item The monotonicity with respect to the order $\nu\,$:
%\begin{equation*}\label{mono}
%K_\nu(t)<K_{\nu-1}(t)\ \ \mbox{for}\ \nu<\frac12\ \ \ \mbox{and}\ \ \ K_\nu(t)\geqslant K_{\nu-1}(t)\ \ \mbox{for}\ \nu\geqslant\frac12.
%\end{equation*}
\item The limiting behavior of $K_{\nu}(t)$:
\begin{align}\label{K1}
K_{\nu}(t) & = \sqrt{\dfrac{\pi}{2t}}\, e^{-t}[1+O(t^{-1})]  & \mbox{as} \ t\to \infty.
% \\
%\label{K2}
%K_{\nu}(t) & \sim \begin{cases}
% 2^{|\nu|-1}\Gamma(|\nu|) t^{-|\nu|}  &  \mbox{for}\ \nu\neq 0, \\  -\ln t & \mbox{for}\ \nu=0,
%\end{cases}  & \mbox{as} \ t\to 0^+ .
\end{align}
%\item The order identity:
%\begin{equation*}\label{K3}
%K_{\nu+1}(t)=K_{\nu-1}(t)+\frac{2\nu}{t}K_{\nu}(t).
%\end{equation*}
\item The derivative identity:
\begin{align}
\frac{d}{dt}K_{\nu}(t) & = -K_{\nu+1}(t)+\frac{\nu}{t}K_{\nu}(t). \label{K4} 
%\\ & = -\frac12(K_{\nu+1}(t)+K_{\nu-1}(t)). \notag
\end{align}
\end{itemize}
Firstly, we set the auxiliary function with respect to the time variable
$$\lambda(t):=(t+1)^{\frac{\mu_1+1}{2}}K_{\frac{\sqrt{\delta}}2}(t+1), \ \ \ t\geqslant0.$$
It is clear by direct computation that $\lambda(t)$ satisfies
\begin{equation}
\begin{cases}
\begin{aligned}\label{lamb}
&\bigg((1+t)^2\frac{d^2}{dt^2}-\mu_1(1+t)\frac{d}{dt}+(\mu_1+\mu_2^2-(1+t)^2)\bigg)\lambda(t)=0, \ \ t>0.\\
&\lambda(0)=K_{\frac{\sqrt{\delta}}2}(1),\ \ \ \lambda(\infty)=0.
\end{aligned}
\end{cases}
\end{equation}
%\begin{rem} \label{rmk imaginary part}
%\textbf{If we do not use \eqref{K2}, then we should remove this remark and Remark \ref{rmk delta<0} as well. Indeed, it would be extremely complicated to justify the difficulties of the case $\delta<0$ in a way that can be understood easily by the reader.}
%One might also consider the case $\delta<0$ with Bessel function of purely imaginary order.
%Actually, by setting
%$$\widetilde{K}_{\nu}(t)=K_{i\nu}(t)$$
%which is a solution of
%$$\bigg(t^2\frac{d^2}{dt^2}+t\frac{d}{dt}-(t^2-\nu^2)\bigg)\widetilde{K}_{\nu}(t)=0, \ \ t>0.$$
%$\widetilde{K}_{\nu}(t)$ also satisfies \eqref{K1} for large $t$.
%However, the limiting behavior property \eqref{K2} is lost.
%When $t\rightarrow 0^+$, one has
%$$\widetilde{K}_{\nu}(t)=-\bigg(\frac{\pi}{\nu\sinh(\pi\nu)}\bigg)^{\frac12}\sin\bigg(\nu\ln\bigg(\frac12t\bigg)-\gamma_\nu\bigg)+O(t^2),$$
%%which is an oscillation form.
%that is, $\widetilde{K}_{\nu}$ is oscillating as $t \to 0^+$.
%\end{rem}

Following \cite{YZ06}, let us introduce the function
\begin{equation*}\varphi(x):=
\begin{cases}
\int_{\mathbb{S}^{n-1}}e^{x\cdot \omega}d\omega \ \ &\mbox{for}\ \ n\geqslant2,\\
e^x+e^{-x} \ \ &\mbox{for}\ \ n=1.
\end{cases}
\end{equation*}
 The function $\varphi$  satisfies $$\Delta\varphi(x)=\varphi(x)$$ and the asymptotic estimate
\begin{equation}
\varphi(x)\sim C_n|x|^{-\frac{n-1}2}e^{|x|} \ \ \ \mbox{as}\ \ \ |x|\rightarrow\infty.
\end{equation}
We define the test function for the sub-critical case
$$\psi(t,x):=\lambda(t)\varphi(x).$$

In the following lemma, we derive a lower bound for $\int_{\mathbb{R}^n}|u(x,t)|^pdx$.
\begin{lem}
Let us assume $f,\ g$ such that $\supp f,\  \supp g \subset B_R$ for some $R>0$ and \eqref{ini-c1} is fulfilled. Then, a local energy solution $u$ satisfies $$\mbox{\rm supp}\ u\ \subset\{(t,x)\in  [0,T)\times\mathbb{R}^n: |x|\leqslant t+R \}  $$ and
there exists a large $T_0$, which is independent of $f,\ g$ and $\varepsilon$, such that for any $t>T_0$ and $p>1$, it holds
\begin{equation}\label{Priori}
\int_{\mathbb{R}^n}|u(t,x)|^pdx\geqslant C_1\varepsilon^p(1+t)^{n-1-\frac{n+\mu_1-1}2 p},
\end{equation}
where {$C_1=C_1(f,g,\varphi,p,R)$}.
\end{lem}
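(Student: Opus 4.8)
The plan is to establish the two assertions separately. For the support property, I would use the standard finite speed of propagation argument for the wave operator. Although \eqref{main} has lower order terms, these do not affect the characteristic cone; one argues via a local energy estimate on truncated backward light cones $K_{t_0,x_0} = \{(t,x) : |x - x_0| \leqslant t_0 - t\}$, using that the damping coefficient $\mu_1/(1+t)$ is nonnegative (so its contribution to the energy inequality has a favorable sign) and the mass coefficient $\mu_2^2/(1+t)^2$ is bounded on compact time intervals (so it can be absorbed via Gronwall). Since $f$ and $g$ are supported in $B_R$, this yields $\supp u(t,\cdot) \subset B_{t+R}$, i.e.\ the stated cone support property.

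For the lower bound \eqref{Priori}, the core idea from \cite{YZ06} is to test the equation against $\psi(t,x) = \lambda(t)\varphi(x)$ and exploit the fact that $\psi$ is, up to the right structure, an ``adjoint solution'' of the linear operator. Define the functional
$$
F(t) := \int_{\mathbb{R}^n} u(t,x)\,\psi(t,x)\,dx.
$$
Using the definition of energy solution \eqref{def} with an approximation of $\psi$ by compactly supported test functions (legitimate since $u(t,\cdot)$ has compact support), and using $\Delta\varphi = \varphi$ together with the ODE \eqref{lamb} satisfied by $\lambda$, one computes that the linear parts combine so that $F$ satisfies a second-order ODE inequality of the form
$$
\frac{d^2}{dt^2}\Big( (1+t)^{-\mu_1} \,\text{(weighted $F$)} \Big) \gtrsim \int_{\mathbb{R}^n} |u(t,x)|^p\,\psi(t,x)\,dx \geqslant 0,
$$
or more precisely a first-order differential inequality after one integration, using the sign conditions \eqref{ini-c1} on the data to control the boundary terms at $t=0$. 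Specifically, the combination of $\mu_1, \mu_2^2$ and $\sqrt\delta$ in \eqref{ini-c1} is exactly what makes the relevant derivative of $F$ nonnegative initially (this is where $\lambda'(0)/\lambda(0)$ enters, via the derivative identity \eqref{K4}). Integrating, one deduces $F(t) \geqslant c\,\varepsilon\,\lambda(t)\,(1+t)^{\mu_1}$ (times a fixed constant depending on $f,g$), for $t$ large; combined with the asymptotics \eqref{K1}, which give $\lambda(t) \sim C (1+t)^{(\mu_1+1)/2}\,(1+t)^{-1/2} e^{-(1+t)} e^{1+t} \cdot$ (constant) $= C(1+t)^{\mu_1/2} e^{-(1+t)}(1+t)^{-1/2}e^{(1+t)}$... rather, $\lambda(t) \sim C(1+t)^{(\mu_1+1)/2 - 1/2} = C(1+t)^{\mu_1/2}$ after the exponential factors cancel, this yields a lower bound $F(t) \gtrsim \varepsilon (1+t)^{\mu_1/2 + \mu_1}$ — wait, I should recompute; in any case $F(t) \gtrsim \varepsilon\, \lambda(t)$ up to polynomial factors.

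The final step is to convert the lower bound on $F(t)$ into the lower bound \eqref{Priori} via Hölder's inequality. Since $u(t,\cdot)$ is supported in $B_{t+R}$,
$$
F(t) = \int_{|x|\leqslant t+R} u\,\psi\,dx \leqslant \left( \int_{\mathbb{R}^n} |u|^p\,dx \right)^{1/p} \left( \int_{|x| \leqslant t+R} \psi(t,x)^{p'}\,dx \right)^{1/p'},
$$
where $p' = p/(p-1)$. Using $\varphi(x) \sim C_n |x|^{-(n-1)/2} e^{|x|}$ one estimates $\int_{|x|\leqslant t+R} \varphi(x)^{p'}\,dx \lesssim (1+t)^{n-1 - (n-1)p'/2} e^{p'(t+R)}$ (the dominant contribution comes from near the boundary sphere $|x| = t+R$), so the weighted integral is $\lesssim \lambda(t)^{p'} (1+t)^{n-1-(n-1)p'/2} e^{p'(1+t)}$. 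Rearranging,
$$
\int_{\mathbb{R}^n} |u(t,x)|^p\,dx \geqslant \frac{F(t)^p}{\left(\int_{|x|\leqslant t+R} \psi^{p'}\,dx\right)^{p-1}} \gtrsim \frac{(\varepsilon\lambda(t))^p}{\lambda(t)^{p}\,(1+t)^{(n-1)(p-1) - (n-1)p/2}\, e^{p(1+t)}}\,,
$$
and tracking the exponential factors in $\lambda(t)^p$ (which contributes $e^{-p(1+t)}$) against the $e^{p(1+t)}$ in the denominator, together with the polynomial powers of $(1+t)$, produces exactly the exponent $n - 1 - \frac{n+\mu_1-1}{2}p$ after simplification. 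I expect the main obstacle to be the bookkeeping in this last rearrangement — carefully matching every power of $(1+t)$ coming from $\lambda(t)$'s prefactor $(1+t)^{(\mu_1+1)/2}$, from the asymptotics of $K_{\sqrt\delta/2}$, and from the volume/weight estimate for $\varphi^{p'}$ near the light cone — and, on the analytic side, rigorously justifying the use of $\psi$ (which is not compactly supported in $x$) as a test function in \eqref{def}, which requires a cutoff argument exploiting the finite speed of propagation established in the first part.
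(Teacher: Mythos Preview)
Your overall architecture matches the paper: define $F(t)=\int u\,\psi\,dx$ with $\psi=\lambda\varphi$, derive a differential inequality for $F$ using \eqref{lamb} and $\Delta\varphi=\varphi$, then convert $F(t)$ into a lower bound on $\|u(t,\cdot)\|_{L^p}^p$ via H\"older and the estimate $\int_{|x|\le t+R}\varphi^{p'}\lesssim (1+t)^{n-1-(n-1)p'/2}e^{p'(t+R)}$. The support claim and the H\"older step are fine.

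The genuine gap is your lower bound on $F(t)$. You write ``$F(t)\gtrsim c\,\varepsilon\,\lambda(t)(1+t)^{\mu_1}$'' and then ``in any case $F(t)\gtrsim \varepsilon\,\lambda(t)$ up to polynomial factors''. Both are wrong, and fatally so: since $\lambda(t)\sim c(1+t)^{\mu_1/2}e^{-(1+t)}$ by \eqref{K1}, either bound would force $F(t)\to 0$ exponentially. Feeding $F(t)\gtrsim\varepsilon\lambda(t)$ into the H\"older step, the $\lambda(t)^p$ in $F(t)^p$ cancels the $\lambda(t)^p$ in $\big(\int\psi^{p'}\big)^{p-1}$, but the remaining factor $e^{p(t+R)}$ from $\int\varphi^{p'}$ survives uncancelled in the denominator, and your final estimate would carry an extra $e^{-p(1+t)}$ and be useless.

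What actually happens (and what the paper does) is that the differential inequality is
\[
F'(t)+\Big(\tfrac{\mu_1}{1+t}-2\tfrac{\lambda'(t)}{\lambda(t)}\Big)F(t)\;\geqslant\;\varepsilon\,C_{f,g},
\]
with right-hand side a \emph{positive constant} (this is precisely where \eqref{ini-c1} enters, via the computation of $\mu_1\lambda(0)-\lambda'(0)$ using \eqref{K4}). Multiplying by the integrating factor $(1+t)^{\mu_1}/\lambda^2(t)$ and integrating gives
\[
F(t)\;\geqslant\;\varepsilon\,C_{f,g}\,\frac{\lambda^2(t)}{(1+t)^{\mu_1}}\int_0^t\frac{(1+s)^{\mu_1}}{\lambda^2(s)}\,ds
\;=\;\varepsilon\,C_{f,g}\,(1+t)K^2_{\sqrt{\delta}/2}(1+t)\int_0^t\frac{ds}{(1+s)K^2_{\sqrt{\delta}/2}(1+s)}.
\]
The prefactor decays like $e^{-2(1+t)}$, but the integral \emph{grows} like $e^{2(1+t)}$ (again by \eqref{K1}), and these cancel: the net lower bound is $F(t)\gtrsim\varepsilon$, a constant. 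It is this constant lower bound, not $\varepsilon\lambda(t)$, that---after dividing by $\big(\int\psi^{p'}\big)^{p-1}\sim C(1+t)^{-(n-1)+\frac{n+\mu_1-1}{2}p}$---produces exactly the exponent in \eqref{Priori}. Your proposal is missing this integrating-factor computation and the resulting exponential cancellation; without it the argument does not close.
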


\begin{proof}
Define the functional
$$F(t):=\int_{\mathbb{R}^n}u(t,x)\psi(t,x)dx$$
with $\psi(t,x)=\lambda(t)\varphi(x)$ defined as above. Then, by H\"{o}lder inequality, we have
\begin{equation}\int_{\mathbb{R}^n}|u(t,x)|^pdx\geqslant |F(t)|^p\bigg(\int_{|x|\leqslant t+R}\psi^{p'}(t,x)dx\bigg)^{-(p-1)}.\label{factor}\end{equation}
{The next step is to determine a lower bound for $|F(t)|$ and an upper bound for the integral $\int_{|x|\leqslant t+R}\psi^{p'}(t,x)dx$,  respectively.}
From the definition of energy solution, we have
\begin{eqnarray*}
&&\int_0^t\int_{\mathbb{R}^n}u_{tt}\psi \, dxds-\int_{0}^t\int_{\mathbb{R}^n}u\Delta\psi \, dxds\\
&&+\int_0^t\int_{\mathbb{R}^n}\Big(\partial_s\Big(\frac{\mu_1}{1+s}\psi u\Big)-\partial_s\Big(\frac{\mu_1}{1+s}\psi\Big)u+\frac{\mu_2^2}{(1+s)^2}\psi u\Big) dxds=\int_0^t\int_{\mathbb{R}^n}|u|^p\psi \,dxds.
\end{eqnarray*}
Applying integration by parts and $\Delta\varphi(x)=\varphi$, we obtain:
\begin{eqnarray*}&&\int_0^t\int_{\mathbb{R}^n}u_{tt}\psi \, dxds+\int_{0}^t\int_{\mathbb{R}^n}u\varphi\Big(-\lambda+\frac{\mu_1+\mu_2^2}{(1+s)^2}\lambda-\frac{\mu_1}{1+s}\lambda'\Big)dxds\\
&&+\int_{\mathbb{R}^n}\frac{\mu_1}{1+s}\psi \, udx\bigg|_0^t =\int_0^t\int_{\mathbb{R}^n}|u|^p\psi \, dxds.
\end{eqnarray*}
Simplifying the above equation by plugging \eqref{lamb} gives
$$\int_0^t\int_{\mathbb{R}^n}u_{tt}\psi \, dxds-\int_{0}^t\int_{\mathbb{R}^n}u\varphi\lambda''  dxds+\int_{\mathbb{R}^n}\frac{\mu_1}{1+s}\psi u \, dx\bigg|_0^t =\int_0^t\int_{\mathbb{R}^n}|u|^p\psi \, dxds.$$
Hence, a further integration by parts leads to
$$\int_{\mathbb{R}^n}\Big(u_t\psi-u\psi_t+\frac{\mu_1}{1+s}u\psi\Big) dx\bigg|_0^t=\int_0^t\int_{\mathbb{R}^n}|u|^p\psi \, dxds.$$
As the righthand side integral is positive, we obtain
$$F'(t)+\bigg(\frac{\mu_1}{1+t}-2\frac{\lambda'(t)}{\lambda(t)}\bigg)F(t)\geqslant\varepsilon \int_{\mathbb{R}^n}\bigg(g(x)\lambda(0)+(\mu_1\lambda(0)-\lambda'(0))f(x)\bigg)\varphi(x)\, dx.$$
Using \eqref{K4}, we have
%\begin{eqnarray*}  %%%% OLD VERSION
%&&\lambda'(t)=\frac{\mu_1+1}2(1+t)^{\frac{\mu_1-1}2}K_{\frac{\sqrt{\delta}}2}(1+t)+(1+t)^{\frac{\mu_1+1}2}K'_{\frac{\sqrt{\delta}}2}(1+t)\nonumber\\
%&=&\frac{\mu_1+1}2(1+t)^{\frac{\mu_1-1}2}K_{\frac{\sqrt{\delta}}2}(1+t)+(t+1)^{\frac{\mu_1+1}{2}}\left(-K_{\frac{\sqrt{\delta}}2+1}(t+1)+\frac{\sqrt{\delta}/2}{t+1}K_{\frac{\sqrt{\delta}}2}(t+1)\right)\nonumber\\
%%%%&=&\frac{\mu_1+1}2(1+t)^{\frac{\mu_1-1}2}K_{\frac{\sqrt{\delta}}2}(1+t)+(t+1)^{\frac{\mu_1+1}{2}}\left(-K_{\frac{\sqrt{\delta}}2-1}(t+1)-\frac{\sqrt{\delta}/2}{t+1}K_{\frac{\sqrt{\delta}}2}(t+1)\right)\nonumber\\
%&=&\frac{\mu_1+1+\sqrt{\delta}}2(1+t)^{\frac{\mu_1-1}2}K_{\frac{\sqrt{\delta}}2}(1+t)-(t+1)^{\frac{\mu_1+1}{2}}K_{\frac{\sqrt{\delta}}2+1}(t+1),
%%%%%&=&\frac{\mu_1+1-\sqrt{\delta}}2(1+t)^{\frac{\mu_1-1}2}K_{\frac{\sqrt{\delta}}2}(1+t)-(t+1)^{\frac{\mu_1+1}{2}}K_{\frac{\sqrt{\delta}}2-1}(t+1)
%\end{eqnarray*}
\begin{align*}
\lambda'(t) & =\tfrac{\mu_1+1}2(1+t)^{\frac{\mu_1-1}2}K_{\frac{\sqrt{\delta}}2}(1+t)+(1+t)^{\frac{\mu_1+1}2}K'_{\frac{\sqrt{\delta}}2}(1+t)\nonumber\\
& =\tfrac{\mu_1+1}2(1+t)^{\frac{\mu_1-1}2}K_{\frac{\sqrt{\delta}}2}(1+t)+(1+t)^{\frac{\mu_1+1}{2}}\left(-K_{\frac{\sqrt{\delta}}2+1}(1+t)+\tfrac{\sqrt{\delta}}{2(1+t)}K_{\frac{\sqrt{\delta}}2}(1+t)\right)\nonumber\\
& =\tfrac{\mu_1+1+\sqrt{\delta}}2(1+t)^{\frac{\mu_1-1}2}K_{\frac{\sqrt{\delta}}2}(1+t)-(1+t)^{\frac{\mu_1+1}{2}}K_{\frac{\sqrt{\delta}}2+1}(1+t),
\end{align*}
%\begin{eqnarray*} %%%% OLD VERSION
%\lambda'(0)&=&\frac{\mu_1+1+\sqrt{\delta}}2K_{\frac{\sqrt{\delta}}2}(1)-K_{\frac{\sqrt{\delta}}2+1}(1).
%%%&=&\frac{\mu_1+1-\sqrt{\delta}}2K_{\frac{\sqrt{\delta}}2}(1)-K_{\frac{\sqrt{\delta}}2-1}(1).
%\end{eqnarray*}
%\begin{eqnarray*}
%\mu_1\lambda(0)-\lambda'(0)&=&\frac{\mu_1-1-\sqrt{\delta}}2K_{\frac{\sqrt{\delta}}2}(1)+K_{\frac{\sqrt{\delta}}2+1}(1)
%\end{eqnarray*}
Also,
\begin{align*}
\lambda'(0)&=\tfrac{\mu_1+1+\sqrt{\delta}}2K_{\frac{\sqrt{\delta}}2}(1)-K_{\frac{\sqrt{\delta}}2+1}(1), \\
\mu_1\lambda(0)-\lambda'(0) & =\tfrac{\mu_1-1-\sqrt{\delta}}2K_{\frac{\sqrt{\delta}}2}(1)+K_{\frac{\sqrt{\delta}}2+1}(1).
\end{align*}

 Consequently,
%\begin{eqnarray*} %%% OLD VERSION
%&&g(x)\lambda(0)+(\mu_1\lambda(0)-\lambda'(0))f(x)\\
%&=&K_{\frac{\sqrt{\delta}}2}(1)(g(x)+\frac{\mu_1-1-\sqrt{\delta}}{2}f(x))+K_{\frac{\sqrt{\delta}}2+1}(1)f(x)
%\end{eqnarray*}
\begin{align*}
&g(x)\lambda(0)+(\mu_1\lambda(0)-\lambda'(0))f(x)=K_{\frac{\sqrt{\delta}}2}(1)\left(g(x)+\tfrac{\mu_1-1-\sqrt{\delta}}{2}f(x)	\right)+K_{\frac{\sqrt{\delta}}2+1}(1)f(x).
\end{align*}

Denote $$C_{f,g}:=\int_{\mathbb{R}^n}\bigg(g(x)\lambda(0)+(\mu_1\lambda(0)-\lambda'(0))f(x)\bigg)\varphi(x)\, dx,$$
then, since we assume compactly supported and satisfying \eqref{ini-c1} $f$ and $g$ , $C_{f,g}$ is finite and positive.
We thus conclude that $F$ satisfies the differential inequality
$$F'(t)+\bigg(\frac{\mu_1}{1+t}-2\frac{\lambda'(t)}{\lambda(t)}\bigg)F(t)\geqslant\varepsilon \, C_{f,g}.$$
Multiplying $\frac{(1+t)^{\mu_1}}{\lambda^2(t)}$ on two sides and then integrating over $[0,t]$, we derive
%$$\bigg(\frac{(1+t)^\mu_1}{\lambda^2(t)}F(t)\bigg)'\geqslant\varepsilon C_{f,g}\frac{(1+t)^\mu}{\lambda^2(t)}$$
$$F(t)\geqslant\varepsilon \, C_{f,g}\frac{\lambda^2(t)}{(1+t)^{\mu_1}}\int_0^t\frac{(1+s)^{\mu_1}}{\lambda^2(s)}\, ds.$$
Inserting $\lambda(t)=(1+t)^{\frac{\mu_1+1}{2}}K_{\frac{\sqrt{\delta}}2}(1+t)$, we obtain the lower bound of $F$
\begin{equation} F(t)\geqslant\varepsilon \,  C_{f,g}\int_0^t\frac{(1+t)K^2_{\frac{\sqrt{\delta}}{2}}(1+t)}{(1+s)K^2_{\frac{\sqrt{\delta}}{2}}(1+s)}\, ds \,  \geqslant 0.\label{G1}\end{equation}

The second factor in the right-hand side of \eqref{factor} can be estimated in standard way (cf. \cite[estimate (2.5)]{YZ06}).
\begin{align}
\int_{|x|\leqslant t+R}\psi^{p'}(t,x)\, dx & \leqslant \lambda^\frac{p}{p-1}(t)\int_{|x|\leqslant t+R}\varphi^{p'}(x)\, dx\notag\\
%&\leqslant (1+t)^{\frac{\mu_1+1}{2}\frac{p}{p-1}}K^{\frac{p}{p-1}}_{\frac{\sqrt{\delta}}2}(1+t)\cdot C_{\varphi}(R+t)^{n-1-\frac{n-1}2\frac{p}{p-1}}e^{\frac{p}{p-1}(t+R)}\notag\\
&\leqslant C_{\varphi,R}(1+t)^{n-1+\big(\frac{\mu_1+1}{2}-\frac{n-1}{2}\big)\frac{p}{p-1}}e^{\frac{p}{p-1}(t+R)}K^{\frac{p}{p-1}}_{\frac{\sqrt{\delta}}2}(1+t),\label{deno}
\end{align}
where $C_{\varphi,R}$ is a suitable positive constant.

Combing the estimate \eqref{G1}, \eqref{deno} and \eqref{factor}, we now have

\begin{align*}\int_{\mathbb{R}^n}|u(x,t)|^pdx 
&\geqslant C_{f,g}^{p}C_{\varphi,R}^{1-p}\varepsilon^p(1+t)^{p -(n-1)(p-1)-\big(\frac{\mu_1+1}{2}-\frac{n-1}{2}\big)p} e^{-p(t+R)} K^p_{\frac{\sqrt{\delta}}2}(1+t)\\ & \qquad \qquad \times\bigg(\int_0^t\frac{ds}{(1+s)K^2_{\frac{\sqrt{\delta}}2}(1+s)}\bigg)^p \\
&\geqslant C_{f,g}^{p}C_{\varphi,R}^{1-p}e^{p(1-R)}\varepsilon^p(1+t)^{(2-n-\mu_1)\frac p2+(n-1)} e^{-p(1+t)} K^p_{\frac{\sqrt{\delta}}2}(1+t)\\ & \qquad \qquad \times\bigg(\int_0^t\frac{ds}{(1+s)K^2_{\frac{\sqrt{\delta}}2}(1+s)}\bigg)^p.
\end{align*}
Since \eqref{K1}, then for sufficient large $T_0$ (which is independent of $f,\ g,\ \varepsilon$) and $t>T_0$, we have
$$K^p_{\frac{\sqrt{\delta}}2}(1+t)\sim \bigg(\frac{\pi}{2(1+t)}\bigg)^\frac p2 e^{-p(t+1)}$$
and
\begin{align*}\int_0^t\frac1{(1+s)K^2_{\frac{\sqrt{\delta}}2}(1+s)}ds &\geqslant \int_{\frac t2}^t\frac{2}\pi e^{2(1+s)}ds \\
&= \frac1\pi\big(e^{2(1+t)}-e^{2+t}\big)\geqslant\frac1{2\pi }e^{2(1+t)}.
\end{align*}
Consequently,
$$\int_{\mathbb{R}^n}|u(x,t)|^pdx\geqslant C_{1}\varepsilon^p(1+t)^{\frac p2(1-n-\mu_1)+(n-1)}\ \ \mbox{for}\ \ t>T_0,$$
where $C_1:= 2^{-p}C_{f,g}^{p}C_{\varphi,R}^{1-p}e^{p(1-R)}\pi^{-p}.$ This concludes the proof.
\end{proof}

\section{Proof of Theorem \ref{sub-critical}  } \label{Section proof thm subcrit}
Let $u$ be an energy solution of \eqref{main} on $[0,T)$ and define
$$G(t):=\int_{\mathbb{R}^n}u(t,x)\,dx.$$
Choosing a  $\phi=\phi(s,x)$ in \eqref{def} that satisfies $\phi\equiv1$ in $\{(x,s)\in  [0,t]\times \mathbb{R}^n :|x|\leqslant s+R\}$, we obtain
\begin{eqnarray*}&&\int_{\mathbb{R}^n}u_t(t,x)\,dx-\int_{\mathbb{R}^n}u_t(0,x)\,dx+\int_0^tds\int_{\mathbb{R}^n}\bigg(\frac{\mu_1 u_t(s,x)}{1+s}+\frac{\mu^2_2u(s,x)}{(1+s)^2}\bigg)dx\\
&&=\int_0^tds\int_{\mathbb{R}^n}|u(s,x)|^pdx
\end{eqnarray*}
which means that
\begin{equation*}
G'(t)-G'(0)+\int_0^t\frac{\mu_1 G'(s)}{1+s}\,ds+\int_0^t\frac{\mu_2^2G(s)}{(1+s)^2}\,ds=\int_0^tds\int_{\mathbb{R}^n}|u(s,x)|^pdx.
\end{equation*}
Since all functions in this equation aside from $G'(t)$ are differentiable in $t$, $G'(t)$ is differentiable in $t$ as well. Hence, we have
\begin{equation}\label{G-Dyn}
G''(t)+\frac{\mu_1}{1+t}G'(t)+\frac{\mu^2_2}{(1+t)^2}G(t)=\int_{\mathbb{R}^n}|u(t,x)|^pdx.
\end{equation}
Consider the quadratic equation
$$r^2-(\mu_1-1)r+\mu_2^2=0.$$
As $\delta\geqslant0$, there exit two real roots,
$$r_1=\frac{\mu_1-1-\sqrt{\delta}}{2},\ \ \ r_2=\frac{\mu_1-1+\sqrt{\delta}}{2}.$$
{Clearly}, if $\mu_1>1$ then both $r_1$ and $r_2$ are positive. {Else, if} $0\leqslant\mu_1<1$, both $r_1$ and $r_2$ are negative. When $\mu_1=1$ then $\mu_2=0$ as $\delta\geqslant0$, {and} hence $r_1=r_2=0$.
Moreover, in whatever situation  $$r_{1,2}+1>0.$$
We may rewrite \eqref{G-Dyn} as
$$\Big(G'(t)+\frac{r_1}{1+t}G(t)\Big)'+\frac{r_2+1}{1+t}\Big(G'(t)+\frac{r_1}{1+t}G(t)\Big)=\int_{\mathbb{R}^n}|u(t,x)|^pdx.$$
Multiplying by $(1+t)^{r_2+1}$ and integrating over $[0,t]$, we obtain
$$(1+t)^{r_2+1}\Big(G'(t)+\frac{r_1}{1+t}G(t)\Big)-\Big(G'(0)+r_1G(0)\Big)=\int_0^t(1+s)^{r_2+1}ds\int_{\mathbb{R}^n}|u|^pdx.$$
Using \eqref{ini-c1}, we have
\begin{equation}\label{key}
G'(t)+\frac{r_1}{1+t}G(t)>(1+t)^{-r_2-1}\int_0^t(1+s)^{r_2+1}ds\int_{\mathbb{R}^n}|u|^pdx.
\end{equation}
Multiplying the above inequality by $(1+t)^{r_1}$ and integrating over $[0,t]$ gives
\begin{equation*}
(1+t)^{r_1}G(t)-G(0)>\int_0^t(1+\tau)^{r_1-r_2-1}d\tau\int_0^\tau(1+s)^{r_2+1} ds\int_{\mathbb{R}^n}|u(s,x)|^pdx.
\end{equation*}
{By the positivity assumption on $f$, we have
\begin{eqnarray}
 G(t)&\geqslant&\int_0^t\bigg(\frac{1+\tau}{1+t}\bigg)^{r_1}d\tau\int_0^\tau \bigg(\frac{1+s}{1+\tau}\bigg)^{r_2+1} ds\int_{\mathbb{R}^n}|u(s,x)|^pdx. \label{iter1}
\end{eqnarray} Furthermore, using H\"{o}lder inequality and the compactness of the support of solution with respect to $x$, we get from \eqref{iter1}
\begin{eqnarray}
  G(t) &\geqslant&C_0\int_0^t\bigg(\frac{1+\tau}{1+t}\bigg)^{r_1}d\tau\int_0^\tau \bigg(\frac{1+s}{1+\tau}\bigg)^{r_2+1}(1+s)^{n(1-p)}|G(s)|^pds\label{iter2}
\end{eqnarray}
where   is used in second inequality and
$$C_0:=(\meas(B_1))^{1-p}R^{-n(p-1)}>0.$$}
At this moment, we are ready to prove {Theorem \ref{sub-critical}}. {We shall apply an iteration method based on lower bound estimates \eqref{Priori}, \eqref{iter1} and \eqref{iter2}.}
\begin{proof}[Proof of Theorem \ref{sub-critical}]
Plugging \eqref{Priori} into \eqref{iter1}, we have for $t>T_0$,
\begin{eqnarray}
G(t)&\geqslant&\int_0^t\bigg(\frac{1+\tau}{1+t}\bigg)^{r_1}d\tau\int_0^\tau\bigg(\frac{1+s}{1+\tau}\bigg)^{r_2+1} C_1\varepsilon^p(1+s)^{n-1-\frac{n+\mu_1-1}{2}p}ds\nonumber\\
&\geqslant&C_1\varepsilon^p(1+t)^{-r_1}\int_{T_0}^t(1+\tau)^{r_1-r_2-1}d\tau\int_{T_0}^\tau(1+s)^{n+r_2-(n+\mu_1-1)\frac p2}ds\nonumber\\
&\geqslant&C_1\varepsilon^p(1+t)^{-r_1}\int_{T_0}^t(1+\tau)^{r_1-r_2-1-(n+\mu_1-1)\frac p2}d\tau\int_{T_0}^\tau(1+s)^{n+r_2}ds\nonumber\\
&\geqslant&C_1\varepsilon^p(1+t)^{-r_2-1-(n+\mu_1-1)\frac p2}\int_{T_0}^td\tau\int_{T_0}^\tau(s-T_0)^{n+r_2}ds.\label{r_1<r_2}
\end{eqnarray}
That is,
\begin{equation}\label{j=1}
G(t)\geqslant C_2\, \varepsilon^p(1+t)^{-r_2-1-(n+\mu_1-1)\frac p2}(t-T_0)^{n+r_2+2}\ \ \mbox{for}\ \ t>T_0,
\end{equation}
where $C_2=\frac{C_1}{(n+r_2+1)(n+r_2+2)}$. Notice that, in \eqref{r_1<r_2} we may simply use the property
$$r_1-r_2-1-(n+\mu_1-1)\frac p2\leqslant 0.$$

Now we begin our iteration argument. Assume that
\begin{equation}\label{iter-assu}
G(t)\geqslant D_j(1+t)^{-a_j}(t-T_0)^{b_j}\ \ \ \mbox{for}\ \ t>T_0,\ \ j=1,2,3 \dots
\end{equation}
with positive constants $D_j,\ a_j$ and $b_j$ {to be} determined later. {From \eqref{j=1} it follows that \eqref{iter-assu}} is true for $j=1$ with
\begin{equation}\label{series1}D_1=C_2\varepsilon^p,\ \ a_1=r_2+1+(n+\mu_1-1)\frac p2,\ \ \ b_1=n+r_2+2.\end{equation}
Plugging \eqref{iter-assu} into \eqref{iter2}, we have for $t>T_0$
%\begin{eqnarray}
%G(t)&\geqslant &C_0(1+t)^{-r_1}\int_{T_0}^t(1+\tau)^{r_1-r_2-1}d\tau % \nonumber \\
%%&&\cdot
%\int_{T_0}^\tau(1+s)^{r_2+1+n(1-p)}D_j^p(1+s)^{-pa_j}(s-T_0)^{pb_j}ds\nonumber\\
%&\geqslant &C_0D_j^p(1+t)^{-r_2-1-n(p-1)-pa_j}\int_{T_0}^t\int_{T_0}^\tau(s-T_0)^{r_2+1+pb_j}dsd\tau\label{r_1r2}\\
%&\geqslant &\frac{C_0D_j^p}{(r_2+pb_j+2)(r_2+pb_j+3)}(1+t)^{-r_2-1-n(p-1)-pa_j}(t-T_0)^{r_2+pb_j+3}.\nonumber
%\end{eqnarray}
\begin{align}
G(t)&\geqslant C_0\,(1+t)^{-r_1}\int_{T_0}^t(1+\tau)^{r_1-r_2-1}d\tau \int_{T_0}^\tau(1+s)^{r_2+1+n(1-p)}D_j^p(1+s)^{-pa_j}(s-T_0)^{pb_j}ds\notag\\
&\geqslant  C_0\, D_j^p(1+t)^{-r_2-1-n(p-1)-pa_j}\int_{T_0}^t\int_{T_0}^\tau(s-T_0)^{r_2+1+pb_j}dsd\tau\label{r_1r2}\\
&\geqslant \frac{C_0D_j^p}{(r_2+pb_j+2)(r_2+pb_j+3)}(1+t)^{-r_2-1-n(p-1)-pa_j}(t-T_0)^{r_2+pb_j+3}.\notag
\end{align}
where in \eqref{r_1r2}, we utilize
$$r_1-r_2-1-n(p-1)-pa_j\leqslant0.$$
So \eqref{iter-assu} is true if the sequences $\{D_j\}$, $\{a_j\}$, $\{b_j\}$ fulfill
{\begin{align} \label{cond Dj+1}
D_{j+1} & \geqslant\frac{C_0}{(r_2+pb_j+3)^2}D_j^p,\\
 a_{j+1} &=r_2+1+n(p-1)+pa_j, \quad  b_{j+1}=r_2+3+pb_j. \label{def aj+1 and bj+1}
\end{align}}
It follows from \eqref{series1} and \eqref{def aj+1 and bj+1} that for $j=1,2,3\dots$
\begin{eqnarray}
a_j&=&\bigg(a_1+n+\frac{r_2+1}{p-1}\bigg)p^{j-1}-\bigg(n+\frac{r_2+1}{p-1}\bigg)\nonumber\\
&=&\alpha p^{j-1}-\bigg(n+\frac{r_2+1}{p-1}\bigg),\label{a_j}\\
b_j&=&\bigg(b_1+\frac{r_2+3}{p-1}\bigg)p^{j-1}-\frac{r_2+3}{p-1}\nonumber\\
&=&\beta p^{j-1}-\frac{r_2+3}{p-1},\label{b_j}
\end{eqnarray}
where we denote the positive constants $$\alpha=
r_2+1+(n+\mu_1-1)\frac p2+n+\frac{r_2+1}{p-1},\ \ \beta=n+r_2+2+\frac{r_2+3}{p-1}.$$
{Using \eqref{def aj+1 and bj+1} and \eqref{b_j}, we get}
$$b_{j+1}=r_2+3+pb_j<p^j\beta.$$
{Therefore, we obtain from the previous inequality and  \eqref{cond Dj+1}}
$$D_{j+1}\geqslant C_3\frac{D^p_j}{p^{2j}}$$
where $$C_3{=\frac{C_0}{\beta^2}}=\frac{C_0}{\Big(n+r_2+2+\frac{r_2+3}{p-1}\Big)^2}.$$
Hence,
\begin{eqnarray*}
\log D_j&\geqslant& p\log D_{j-1}-2(j-1)\log p+\log C_3\\
&\geqslant& p^2\log D_{j-2}-2(p(j-2)+(j-1))\log p+(p+1)\log C_3\\
&\geqslant&\cdots\\
&\geqslant&p^{j-1}\log D_1-2\log p\sum_{k=1}^{j-1}kp^{j-1-k}+\log C_3\sum_{k=1}^{j-1}p^k.
\end{eqnarray*}
{Using an inductive argument, the following formulas can be shown:}
$$\sum_{k=1}^{j-1}kp^{j-1-k}=\frac{1}{p-1}\bigg(\frac{p^j-1}{p-1}-j\bigg)$$ and
$$\sum_{k=1}^{j-1}p^k=\frac{p-p^j}{1-p},$$
which yield
\begin{eqnarray*}
\log D_j&\geqslant& p^{j-1}\log D_1-\frac{2\log p}{p-1}\bigg(\frac{p^j-1}{p-1}-j\bigg)+\log C_3\frac{p-p^j}{1-p}\\
&=&p^{j-1}\bigg(\log D_1-\frac{2p\log p}{(p-1)^2}+\frac{p\log C_3}{p-1}\bigg)+\frac{2\log p}{p-1}j+\frac{2\log p}{(p-1)^2}+\frac{p\log C_3}{1-p}.
\end{eqnarray*}
Consequently, for $j>\left[\frac{p\log C_3}{2\log p}-\frac{1}{p-1}\right]+1$ it holds
\begin{equation}\label{D_j}D_j\geqslant\exp\{p^{j-1}(\log D_1-S_p(\infty))\}\end{equation}
with
$$ S_p(\infty):=\frac{2p\log p}{(p-1)^2}-\frac{p\log C_3}{p-1}.$$
Inserting \eqref{a_j}, \eqref{b_j} and \eqref{D_j} into \eqref{iter-assu} gives
\begin{eqnarray}
G(t)&\geqslant&\exp\big(p^{j-1}(\log D_1-S_p(\infty))\big)(1+t)^{-\alpha p^{j-1}+n+\frac{r_2+1}{p-1}}(t-T_0)^{\beta p^{j-1}-\frac{r_2+3}{p-1}}\nonumber\\
&\geqslant&\exp\big(p^{j-1}J(t)\big)(1+t)^{n+\frac{r_2+1}{p-1}}(t-T_0)^{-\frac{r_2+3}{p-1}},\label{contr}
\end{eqnarray}
where
$$J(t):=\log D_1-S_p(\infty)-\alpha\log(1+t)+\beta\log(t-T_0).$$
For $t>2T_0+1$, we have
\begin{eqnarray*}
J(t)&\geqslant& \log D_1-S_p(\infty)-\alpha\log(2t-2T_0)+\beta\log(t-T_0)\\
&\geqslant&\log D_1-S_p(\infty)+(\beta-\alpha)\log(t-T_0)-\alpha\log2\\
&=&\log(D_1\cdot(t-T_0)^{\beta-\alpha})-S_p(\infty)-\alpha\log2.
\end{eqnarray*}
Note that
$$\beta-\alpha=b_1-a_1-n+\frac2{p-1}=\frac{p+1}{p-1}-(n+\mu_1-1)\frac p2=\frac{\gamma(p,n+\mu_1)}{2(p-1)}.$$
Thus, if $$t>\max\bigg\{T_0+\bigg(\frac{e^{(S_p(\infty)+\alpha\log2)+1}}{C_2\varepsilon^p}\bigg)^{2(p-1)/\gamma(p,n+\mu_1)},2T_0+1\bigg\},$$
then, we get $J(t)>1$, and this in turn gives $G(t)\rightarrow\infty$ by taking $j\rightarrow\infty$ in \eqref{contr}. Therefore, there exists a sufficiently small $\varepsilon_0>0$ such that for any $\varepsilon<\varepsilon_0$ we obtain the desired upper bound,
$$T\leqslant C_4\varepsilon^{-\frac{2p(p-1)}{\gamma(p,n+\mu_1)}}$$
with
$$C_4:=\left(\frac{e^{(S_p(\infty)+\alpha\log2)+1}}{C_2}\right)^{2(p-1)/\gamma(p,n+\mu_1)}.$$
This completes our proof of Theorem \ref{sub-critical}.
\end{proof}

%\begin{rem}\label{rmk delta<0}
%{\color{red} It has been already explained in Remark \ref{rmk imaginary part} the technical problem that does not allow to consider the case $\delta<0$. The reason behind this difficulty is that the nature of the equation changes for $\mu_1$ and $\mu_2^2 $ satisfying $\delta<0$. Indeed, in this case, we can somehow think to \eqref{main} as to a Klein-Gordon equation with critical decay speed in the coefficient for the mass term, considering in some sense the damping term as a negligible perturbation. So, the difficulty consists in the development of a blow-up technique for ``Klein-Gordon like'' models. However, this research topic, up to the knowledge of the authors, is  still an open problem.}
%{\color{blue} It has been already explained in Remark \ref{rmk imaginary part} the technical problem that does not allow to consider the case $\delta<0$. The reason behind this difficulty is that the nature of the equation changes for $\mu_1$ and $\mu_2^2 $ satisfying $\delta<0$. For further details see \cite{DabbPal18}.}
%
%\end{rem}

\section{Test function and preliminaries: critical case} \label{Section test func crit}

In this section and in the next one, we adapt the approach from \cite{IS17}, with  the purpose to include the scale-invariant mass term.

Firstly, let us construct a suitable solution of the adjoint equation of  \eqref{lin eq scal inv} in % the \textcolor{orange}{forward cone} 
$Q_1:=\{(t,x)\in  [0,\infty) \times \mathbb{R}^n: |x|< 1+t \}$. In other terms, we look for a function $\Phi=\Phi(t,x)$ which solves
\begin{equation}\label{lin eq PHI}
\partial_t^2 \Phi  -\Delta \Phi -\partial_t \Big(\frac{\mu_1}{1+t} \, \Phi \Big)+\frac{\mu_2^2}{(1+t)^2} \,\Phi =0 \quad \mbox{for any} \ \  (t,x)\in Q_1.
%\partial_t^2 \Phi_\beta -\Delta \Phi_\beta-\partial_t \Big(\frac{\mu_1}{1+t} \, \Phi_\beta\Big)+\frac{\mu_2^2}{(1+t)^2} \,\Phi_\beta =0 ,
\end{equation}

\begin{prop}\label{Prop rel PHI and psi} Let $\beta$ be a real parameter. Let us make the following ansatz:
\begin{equation}\label{kind PHI}
 \Phi_\beta(t,x):= (1+t)^{-\beta+1} \psi_\beta\bigg( \frac{|x|^2}{(1+t)^2}\bigg),
 \end{equation} where $\psi_\beta \in \mathcal{C}^2([0,1)).$
Then, $\Phi_\beta$ solves \eqref{lin eq PHI} if and only if $\psi_\beta$ solves
\begin{equation}\label{lin eq psi}
z(1-z)\psi^{\prime\prime}_\beta (z)+\big(\tfrac{n}{2}-\big(\beta+\tfrac{1}{2}+\tfrac{\mu_1}{2}\big) z\big)\psi^{\prime}_\beta (z) -\big( \tfrac{\beta(\beta+\mu_1-1)+\mu_2^2}{4}\big) \psi_\beta(z) =0.
\end{equation}
\end{prop}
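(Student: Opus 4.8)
The plan is to prove Proposition \ref{Prop rel PHI and psi} by a direct substitution of the ansatz \eqref{kind PHI} into \eqref{lin eq PHI}, reducing the partial differential equation to an ordinary differential equation in the self-similar variable $z=|x|^2/(1+t)^2$. First I would introduce the shorthand $r=|x|$ and $z=r^2/(1+t)^2$, and compute the two building-block derivatives $\partial_t z$ and $\Delta_x \big(\psi_\beta(z)\big)$. The spatial piece is the cleaner one: since $z$ depends on $x$ only through $r^2$, one has $\partial_{x_j} z = 2x_j/(1+t)^2$ and hence, writing everything in terms of $z$,
\begin{equation*}
\Delta_x\big(\psi_\beta(z)\big)=\frac{4z}{(1+t)^2}\,\psi_\beta''(z)+\frac{2n}{(1+t)^2}\,\psi_\beta'(z),
\end{equation*}
using that $\sum_j \partial_{x_j}^2 (r^2)=2n$ and $|\nabla_x (r^2)|^2=4r^2$. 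The time derivatives require the product rule on the prefactor $(1+t)^{-\beta+1}$ together with $\partial_t z = -2r^2/(1+t)^3=-2z/(1+t)$; carrying out $\partial_t^2$ and the damping term $-\partial_t\big(\tfrac{\mu_1}{1+t}\Phi_\beta\big)$ produces a finite collection of terms, each of the form $(1+t)^{-\beta-1}$ times a polynomial in $z$ acting on $\psi_\beta$, $\psi_\beta'$, $\psi_\beta''$.

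The key organizational step is then to collect the resulting expression by the order of the derivative of $\psi_\beta$. Every term carries the common factor $(1+t)^{-\beta-1}$, which can be divided out, leaving an identity that must hold for all $z\in[0,1)$. I expect the $\psi_\beta''$ coefficient to assemble into $4z - 4z^2 = 4z(1-z)$ (the $-4z^2$ coming from the $\partial_t^2$ action on $z$-dependence, the $+4z$ from the Laplacian), the $\psi_\beta'$ coefficient to assemble into $2n - 4\big(\beta+\tfrac12+\tfrac{\mu_1}{2}\big)z$, and the $\psi_\beta$ coefficient to assemble into $-\big(\beta(\beta+\mu_1-1)+\mu_2^2\big)$. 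Dividing the whole identity by $4$ yields exactly \eqref{lin eq psi}. Since the manipulation is a chain of reversible algebraic equalities — nothing is estimated or discarded — the equivalence ``$\Phi_\beta$ solves \eqref{lin eq PHI} if and only if $\psi_\beta$ solves \eqref{lin eq psi}'' follows at once, for any $\beta\in\mathbb{R}$ and any $\psi_\beta\in\mathcal C^2([0,1))$.

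The only real obstacle is bookkeeping: the damping term $-\partial_t\big(\tfrac{\mu_1}{1+t}\Phi_\beta\big)$ and the second time derivative $\partial_t^2\Phi_\beta$ each generate several terms, and one must be careful to track the exact numerical coefficients (in particular the linear-in-$\beta$ contribution to the $\psi_\beta'$ coefficient and the $\mu_1$-dependent constant term) so that the damping sign in \eqref{lin eq PHI} — note it is $-\partial_t(\cdots)$, reflecting that $\Phi$ solves the \emph{adjoint} of \eqref{lin eq scal inv} — is correctly incorporated. It is worth double-checking the $\psi_\beta$-coefficient against the two roots $\beta=0$ and $\beta=1-\mu_1$ of $\beta(\beta+\mu_1-1)=0$ when $\mu_2=0$, which is a useful consistency test. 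Beyond that the proof is a routine, if somewhat lengthy, computation, and I would present it compactly by displaying only the three collected coefficients rather than every intermediate line.
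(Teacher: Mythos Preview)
Your proposal is correct and follows essentially the same approach as the paper: a direct substitution of the ansatz, computation of $\partial_t\Phi_\beta$, $\partial_t^2\Phi_\beta$, and $\Delta\Phi_\beta$, and then collection of the resulting expression as $(1+t)^{-\beta-1}$ times a combination of $\psi_\beta$, $\psi_\beta'$, $\psi_\beta''$. The paper displays the intermediate derivatives explicitly rather than only the collected coefficients, but the argument and the final identification with \eqref{lin eq psi} are identical.
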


%Let us construct a solution  $\Phi_\beta=\Phi_\beta(t,x)$ of the linear equation
%\begin{equation}\label{lin eq PHI}
%\partial_t^2 \Phi_\beta -\Delta \Phi_\beta-\partial_t \Big(\frac{\mu_1}{1+t} \, \Phi_\beta\Big)+\frac{\mu_2^2}{(1+t)^2} \,\Phi_\beta =0,
%\end{equation} which is the adjoint equation for the linear wave equation with scale-invariant damping and mass.

%We look for a solution to \eqref{lin eq PHI} of the following kind:
%\begin{equation}\label{kind PHI}
% \Phi_\beta(t,x):= (1+t)^{-\beta+1} \psi_\beta\bigg( \frac{|x|^2}{(1+t)^2}\bigg),
% \end{equation}
%where $\beta$ is a real parameter, whose range will be fixed afterwards. 

\begin{proof}
%  According to our goal ($\Phi_\beta$ has to solve \eqref{lin eq PHI}), we can determine now the equation that $\psi_\beta$ shall satisfy.
 For the sake of brevity we introduce the notation $z:=\frac{|x|^2}{(1+t)^2}$.
By straightforward computations, it follows
\begin{align*}
\partial_t \Phi_\beta(t,x)  
%& =  (-\beta+1) (1+t)^{-\beta} \psi_\beta \Big( \tfrac{|x|^2}{(1+t)^2}\Big)+(1+t)^{-\beta+1} \psi^{\prime}_\beta \Big( \tfrac{|x|^2}{(1+t)^2}\Big) \Big(\tfrac{-2|x|^2}{(1+t)^3}\Big)\\
 % &=  (-\beta+1) (1+t)^{-\beta} \psi_\beta \Big( \tfrac{|x|^2}{(1+t)^2}\Big)-2(1+t)^{-\beta} \psi^{\prime}_\beta \Big( \tfrac{|x|^2}{(1+t)^2}\Big) \Big(\tfrac{|x|^2}{(1+t)^2}\Big), \\
  &=  (-\beta+1) (1+t)^{-\beta} \psi_\beta (z)-2(1+t)^{-\beta} z\,  \psi^{\prime}_\beta (z), \\
\partial_t ^2\Phi_\beta(t,x)  %&=   (\beta-1) \beta (1+t)^{-\beta-1} \psi_\beta \Big( \tfrac{|x|^2}{(1+t)^2}\Big)+2(-\beta+1)(1+t)^{-\beta} \psi^{\prime}_\beta \Big( \tfrac{|x|^2}{(1+t)^2}\Big) \Big(\tfrac{-2|x|^2}{(1+t)^3}\Big) \\ & \quad +(1+t)^{-\beta+1} \psi^{\prime \prime}_\beta \Big( \tfrac{|x|^2}{(1+t)^2}\Big) \Big(\tfrac{4|x|^4}{(1+t)^6}\Big)+(1+t)^{-\beta+1} \psi^{\prime}_\beta \Big( \tfrac{|x|^2}{(1+t)^2}\Big) \Big(\tfrac{6|x|^2}{(1+t)^4}\Big) \\
 % &=  (\beta-1) \beta (1+t)^{-\beta-1} \psi_\beta \Big( \tfrac{|x|^2}{(1+t)^2}\Big)+4(\beta-1)(1+t)^{-\beta-1} \psi^{\prime}_\beta \Big( \tfrac{|x|^2}{(1+t)^2}\Big) \Big(\tfrac{|x|^2}{(1+t)^2}\Big) \\ & \quad +4(1+t)^{-\beta-1} \psi^{\prime \prime}_\beta \Big( \tfrac{|x|^2}{(1+t)^2}\Big) \Big(\tfrac{|x|^2}{(1+t)^2}\Big)^2+6(1+t)^{-\beta-1} \psi^{\prime}_\beta \Big( \tfrac{|x|^2}{(1+t)^2}\Big) \Big(\tfrac{|x|^2}{(1+t)^2}\Big) , 
  &=  (\beta-1) \beta (1+t)^{-\beta-1} \psi_\beta (z)+4(\beta-1)(1+t)^{-\beta-1} z\,  \psi^{\prime}_\beta (z) \\ & \quad +4(1+t)^{-\beta-1} z^2 \, \psi^{\prime \prime}_\beta (z) +6(1+t)^{-\beta-1} z\, \psi^{\prime}_\beta (z) , 
\end{align*}
 and 
%\begin{align*}
%\partial_{x_j}^2\Phi_\beta(t,x) &=   2(1+t)^{-\beta-1} \psi^{\prime}_\beta \Big( \tfrac{|x|^2}{(1+t)^2}\Big) +4(1+t)^{-\beta-3} x_j^2 \psi^{\prime\prime}_\beta \Big( \tfrac{|x|^2}{(1+t)^2}\Big) , & \mbox{for} \ j=1,\cdots, n,
%\end{align*} which implies
\begin{align*}
\Delta\Phi_\beta(t,x)  %&= 2n(1+t)^{-\beta-1} \psi^{\prime}_\beta \Big( \tfrac{|x|^2}{(1+t)^2}\Big) +4(1+t)^{-\beta-3}|x|^2 \psi^{\prime\prime}_\beta \Big( \tfrac{|x|^2}{(1+t)^2}\Big)  \\ 
%&= 2n(1+t)^{-\beta-1} \psi^{\prime}_\beta \Big( \tfrac{|x|^2}{(1+t)^2}\Big) +4(1+t)^{-\beta-1} \psi^{\prime\prime}_\beta \Big( \tfrac{|x|^2}{(1+t)^2}\Big)\Big(\tfrac{|x|^2}{(1+t)^2}\Big) .
&= 2n(1+t)^{-\beta-1} \psi^{\prime}_\beta (z) +4(1+t)^{-\beta-1} z \, \psi^{\prime\prime}_\beta (z) .
\end{align*}

Plugging the previous relations, we obtain the following identity:
\begin{align*}
 \partial_t^2 \Phi_\beta - & \Delta \Phi_\beta-\partial_t \Big(\frac{\mu_1}{1+t} \, \Phi_\beta\Big)+\frac{\mu_2^2}{(1+t)^2}\,  \Phi_\beta % \\ & =\partial_t^2 \Phi_\beta -\Delta \Phi_\beta-\frac{\mu_1}{1+t} \,  \partial_t\Phi_\beta +\frac{\mu_1+\mu_2^2}{(1+t)^2} \, \Phi_\beta  
%\\ &= (1+t)^{-\beta-1}\Big((\beta-1) \beta \psi_\beta (z)+4(\beta-1) z \psi^{\prime}_\beta(z) +4 z^2 \psi^{\prime \prime}_\beta (z) +6  z \psi^{\prime}_\beta (z)\Big) \\
%& \quad - (1+t)^{-\beta-1}\Big( 2n \psi^{\prime}_\beta(z) +4 z \psi^{\prime \prime}_\beta (z)\Big) - \frac{\mu_1}{1+t}\, (1+t)^{-\beta}\Big((-\beta+1)\psi_\beta (z) -2z \psi^{\prime}_\beta (z) \Big)\\ & \quad +\frac{\mu_1+\mu_2^2}{(1+t)^2}\, (1+t)^{-\beta+1}\psi_\beta(z) 
\\ &= (1+t)^{-\beta-1}\Big(4z(z-1)\psi^{\prime\prime}_\beta (z)+((4(\beta-1)+6+2\mu_1) z-2n)\psi^{\prime}_\beta (z) \\ & \qquad\qquad\qquad\quad+ (\beta(\beta-1)-\mu_1(-\beta+1)+\mu_1+\mu_2^2) \psi_\beta(z) \Big).
%\\ &= -4(1+t)^{-\beta-1}\Big(z(1-z)\psi^{\prime\prime}_\beta (z)+\big(\tfrac{n}{2}-\big(\beta+\tfrac{1}{2}+\tfrac{\mu_1}{2}\big) z\big)\psi^{\prime}_\beta (z) -\big( \tfrac{\beta(\beta+\mu_1-1)+\mu_2^2}{4}\big) \psi_\beta(z) \Big).
\end{align*}
Also, $\Phi_\beta$ solves \eqref{lin eq PHI} if and only if $\psi_\beta$ is a solution to \eqref{lin eq psi}.
%\begin{equation}\label{lin eq psi}
%z(1-z)\psi^{\prime\prime}_\beta (z)+\big(\tfrac{n}{2}-\big(\beta+\tfrac{1}{2}+\tfrac{\mu_1}{2}\big) z\big)\psi^{\prime}_\beta (z) -\big( \tfrac{\beta(\beta+\mu_1-1)+\mu_2^2}{4}\big) \psi_\beta(z) =0.
%\end{equation}
\end{proof}

If we find $a,b$ such that 
\begin{equation}\label{condition on a,b}
a+b+1=\beta+\tfrac{1}{2}+\tfrac{\mu_1}{2}\, , \quad
ab  = \tfrac{\beta(\beta+\mu_1-1)+\mu_2^2}{4}\, ,
\end{equation} then, \eqref{lin eq psi} coincides with the hypergeometric equation with parameters $(a,b\, ;\tfrac{n}{2})$, namely,
\begin{align*}
z(1-z)\psi^{\prime\prime}_\beta (z)+\big(\tfrac{n}{2}-(a+b+1) z\big)\psi^{\prime}_\beta (z) -a b \psi_\beta(z) =0.
\end{align*}

 Hence, whether $a,\ b$ fulfill \eqref{condition on a,b}, we can choose the Gauss hypergeometric function with parameters $(a,b\, ;\tfrac{n}{2})$  as solution to the above equation, i.e.,
\begin{equation} \label{def psi beta}
\psi_\beta(z) :=F(a,b\, ;\tfrac{n}{2}\, ;z) =\sum_{k=0}^\infty \frac{(a)_k (b)_k}{(n/2)_k} \frac{z^k}{k!},
\end{equation}  provided that $|z|<1$ or, equivalently, $(t,x)\in Q_1$. In \eqref{def psi beta} the so-called Pochhammer's symbol $(m)_k$ is defined by $$(m)_k =\begin{cases} 1 & \mbox{if} \ k=0, \\ \prod_{j=1}^k (m+j-1) & \mbox{if} \ k\geqslant 0.\end{cases}$$ 

It is actually possible to choose $a,b$ satisfying \eqref{condition on a,b}.
Indeed, the quadratic equations 
\begin{align*}
r^2-\big(\beta+\tfrac{\mu_1}{2}-\tfrac{1}{2}\big)r + \tfrac{\beta(\beta+\mu_1-1)+\mu_2^2}{4}=0
\end{align*} has an  independent of $\beta$ and nonnegative discriminant due to the assumption $\delta\geqslant 0$. Let us introduce
\begin{align}
a & := \tfrac{\beta}{2} +\tfrac{\mu_1-1}{4}+ \tfrac{\sqrt{\delta}}{4}, \label{definition a} \\
b & := \tfrac{\beta}{2} +\tfrac{\mu_1-1}{4}- \tfrac{\sqrt{\delta}}{4}. \label{definition b} 
\end{align}
Then, $a$ and $b$ fulfill \eqref{condition on a,b}.

\begin{defn} Let $a,b$ be defined by \eqref{definition a} and \eqref{definition b}, respectively. We introduce the following function
\begin{align}\label{definition PSI}
\Phi_\beta= \Phi_{\beta}(t,x\, ;\mu_1,\mu_2) & \hphantom{:}= (1+t)^{-\beta+1} \psi_{\beta}\Big( \tfrac{|x|^2}{(1+t)^2}\Big) \notag\\ & := (1+t)^{-\beta+1} F\Big(a,b\, ;\tfrac{n}{2}\, ; \tfrac{|x|^2}{(1+t)^2}\Big) \quad \mbox{for} \ (t,x)\in Q_1.
\end{align} 
\end{defn}
 According to Proposition \ref{Prop rel PHI and psi} $\Phi_\beta$ solves \eqref{lin eq PHI} in $Q_1$. The next step is to provide the asymptotic behavior of $\psi_\beta $ and $\psi'_\beta$.
 
\begin{lem} The following estimates are satisfied:
\begin{itemize}
\item[\rm{(i)}] if $\frac{\sqrt{\delta}-\mu_1+1}{2}<\beta <\frac{n-\mu_1+1}{2}$, then, there exists  $C'=C'(\beta,n,\mu_1,\mu_2)>1$ such that for any $z\in [0,1)$ it holds
\begin{align}\label{asymp psi beta}
1\leqslant \psi_\beta (z)\leqslant C'\, ; %\qquad \mbox{for} \ z\in [0,1);
\end{align}
\item[\rm{(ii)}]  if $\beta >\frac{n-\mu_1-1}{2}$, then, there exists $C''=C''(\beta, n, \mu_1,\mu_2)>1$ such that for any $z\in [0,1)$ it holds
\begin{align} \label{asymp psi' beta}
 \tfrac{1}{C''} (1-\sqrt{z})^{\frac{n-\mu_1-1}{2}-\beta} \leqslant \big|\psi'_\beta (z)\big|\leqslant C''  (1-\sqrt{z})^{\frac{n-\mu_1-1}{2}-\beta} \, . % \qquad \mbox{for} \ z\in [0,1).
 \end{align}
\end{itemize}
\end{lem}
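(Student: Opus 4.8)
The plan is to read off both estimates from the behaviour of the Gauss hypergeometric function $F(a,b;c;z)$ as $z\to1^-$, with $c=\tfrac{n}{2}$ and $a,b$ as in \eqref{definition a}--\eqref{definition b}. The first step is to translate the hypotheses into conditions on the hypergeometric parameters. From the definitions of $a,b$ one has $a+b=\beta+\tfrac{\mu_1-1}{2}$ and $a-b=\tfrac{\sqrt{\delta}}{2}\geqslant0$, hence
$$ c-a-b=\tfrac{n-\mu_1+1}{2}-\beta, \qquad b=\tfrac{\beta}{2}+\tfrac{\mu_1-1}{4}-\tfrac{\sqrt{\delta}}{4}. $$
Thus the condition in (i), $\tfrac{\sqrt{\delta}-\mu_1+1}{2}<\beta<\tfrac{n-\mu_1+1}{2}$, says exactly that $b>0$ (so also $a\geqslant b>0$, $c-a>b>0$ and $c-b>a>0$) and $c-a-b>0$; the condition in (ii), $\beta>\tfrac{n-\mu_1-1}{2}$, says exactly that $(a{+}1)+(b{+}1)-(c{+}1)=a+b+1-c=\beta-\tfrac{n-\mu_1-1}{2}>0$, i.e.\ the parameters appearing in the series for $\psi'_\beta$ lie in the regime in which $F$ diverges at $z=1$.

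For part (i): since $n\geqslant1$ and $a,b>0$, every coefficient $\tfrac{(a)_k(b)_k}{(n/2)_k\,k!}$ in \eqref{def psi beta} is strictly positive, so $\psi_\beta$ is continuous and strictly increasing on $[0,1)$ with $\psi_\beta(z)\geqslant\psi_\beta(0)=1$, giving the lower bound. Because $c-a-b>0$ (and $c=\tfrac{n}{2}>0$), Gauss's summation theorem yields the finite value $\lim_{z\to1^-}\psi_\beta(z)=F(a,b;\tfrac{n}{2};1)=\tfrac{\Gamma(n/2)\,\Gamma(c-a-b)}{\Gamma(c-a)\,\Gamma(c-b)}=:C'$, which is $>1$ by strict monotonicity and depends only on $\beta,n,\mu_1,\mu_2$ (through $a,b,\delta$); monotonicity then gives $\psi_\beta(z)\leqslant C'$ on $[0,1)$, which is \eqref{asymp psi beta}.

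For part (ii): differentiating \eqref{def psi beta} termwise gives the standard identity $\psi'_\beta(z)=\tfrac{2ab}{n}\,F(a+1,b+1;\tfrac{n}{2}+1;z)$. Set $\theta:=\beta-\tfrac{n-\mu_1-1}{2}>0$. Since $(c{+}1)-(a{+}1)-(b{+}1)=-\theta<0$, the connection formula for the hypergeometric function at $z=1$ (see \cite{Erdelyi}) gives
$$ F\big(a+1,b+1;\tfrac{n}{2}+1;z\big)=\tfrac{\Gamma(n/2+1)\,\Gamma(\theta)}{\Gamma(a+1)\,\Gamma(b+1)}\,(1-z)^{-\theta}\,\big(1+o(1)\big)\qquad(z\to1^-). $$
Under the parameter ranges of interest (in particular $\delta<n^2$, $n\geqslant1$) one checks that $a+1>0$, $b+1>0$ and $ab\neq0$, so $F(a+1,b+1;\tfrac{n}{2}+1;\cdot)$ is continuous and non-vanishing on $[0,1)$; combining this with the displayed asymptotics, the map $z\mapsto(1-z)^{\theta}\,|F(a+1,b+1;\tfrac{n}{2}+1;z)|$ extends to a continuous, strictly positive function on the compact interval $[0,1]$, hence is bounded above and below there by positive constants. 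Multiplying by $\tfrac{2|ab|}{n}$ gives $\tfrac{1}{C}(1-z)^{-\theta}\leqslant|\psi'_\beta(z)|\leqslant C(1-z)^{-\theta}$, and replacing $1-z$ by $1-\sqrt{z}$ via $1-z=(1-\sqrt{z})(1+\sqrt{z})$ with $1\leqslant1+\sqrt{z}\leqslant2$ only changes the constant; since $-\theta=\tfrac{n-\mu_1-1}{2}-\beta$, this is \eqref{asymp psi' beta}.

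The only non-routine point is upgrading the asymptotics at $z=1$, which come for free from the connection formulae, to two-sided bounds that are uniform on all of $[0,1)$: this requires knowing that the relevant hypergeometric function has no interior zeros, for which one uses the positivity of its series coefficients, equivalently the sign of the parameters $a,b$ (resp.\ $a+1,b+1$). Checking these sign conditions from $\delta\geqslant0$, $n\geqslant1$ and the standing assumptions is short bookkeeping, and it is precisely where the lower restriction on $\beta$ in (i) and the smallness of $\delta$ enter.
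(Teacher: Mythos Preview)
Your proof is correct and follows essentially the same route as the paper's: both reduce (i) and (ii) to standard facts about the Gauss hypergeometric function $F(a,b;c;z)$ near $z=1$ (Gauss's summation theorem when $c>a+b$, and the connection formula/asymptotic $F\sim C(1-z)^{c-a-b}$ when $c<a+b$), applied to $\psi_\beta=F(a,b;\tfrac{n}{2};\cdot)$ and $\psi'_\beta=\tfrac{2ab}{n}F(a{+}1,b{+}1;\tfrac{n}{2}{+}1;\cdot)$. The paper simply cites the NIST formulae 15.4.20 and 15.4.23 after noting that the hypotheses on $\beta$ translate into $a,b>0$, $a+b<\tfrac{n}{2}$ in (i) and $(a{+}1)+(b{+}1)>\tfrac{n}{2}+1$ in (ii); you spell out the same translation and add the details the paper leaves implicit (positivity of the series coefficients to rule out interior zeros, the $(1-z)\leftrightarrow(1-\sqrt{z})$ swap, and the acknowledgment that the smallness of $\delta$ is needed to keep $a+1,b+1>0$ in (ii)).
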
 

\begin{proof}
(i) The assumption on  $\beta$ implies that $a,b>0 $ and $a+b<\frac{n}{2}$. Since $\psi_\beta=F(a,b,\, ; \tfrac{n}{2}\, ;\cdot)$, \eqref{asymp psi beta} follows immediately by \cite[Section 15.4 (ii), formula 15.4.20]{OLBC10}. \\
(ii) Because of $\psi'_\beta=\tfrac{2ab}{n} F(a+1,b+1\, ; \tfrac{n}{2}+1\, ; \cdot)$, the assumption on $\beta$ and \cite[Section 15.4 (ii), formula 15.4.23]{OLBC10} imply \eqref{asymp psi' beta}.
\end{proof}
 
Before proving Theorem \ref{thm critical case}, we derive some preliminary lemmas. First of all, we introduce the following functionals 
\begin{align}
\mathcal{G}_\beta(t) & := \int_{\mathbb{R}^n} |u(t,x)|^p\, \Phi_\beta(t,x \, ; \mu_1,\mu_2) \, dx\, , \label{definition G beta}\\
\mathcal{H}_\beta(t) & := \int_0^t (t-s)(1+s)\,\mathcal{G}_\beta(s) \, ds\, ,  \label{definition H beta}  \\
\mathcal{J}_\beta(t) & := \int_0^t (2+s)^{-3}\,\mathcal{H}_\beta(s) \, ds\, , \label{definition J beta}
\end{align}
where $\beta\in \Big(\frac{\sqrt{\delta}-\mu_1+1}{2}, \frac{n-\mu_1+1}{2}\Big)$ and $t\geqslant 0$.
We remark that $\delta$ should be smaller than $n^2$ in order to get a nonempty range for $\beta$.

\begin{rem}\label{rmk lifespans} From \eqref{asymp psi beta} it follows that $G_\beta(t) \approx (1+t)^{1-\beta}\| u(t,\cdot)\|_{L^p(\mathbb{R}^n)}^p$. Hence, if we prove that $\mathcal{J}_\beta$ blows up in finite time, then, in turn, $\mathcal{H}_\beta$ blows up in finite time and $G_\beta(t)$ as well. Due to the previous relation, we get hence that the lifespan of $\mathcal{J}_\beta$ is an upper bound for the lifespan $T$ of the energy solution solution $u$ of \eqref{main}.
\end{rem}

\begin{lem} \label{lemma relation G beta and J beta} For any $\beta\in \Big(\frac{\sqrt{\delta}-\mu_1+1}{2}, \frac{n-\mu_1+1}{2}\Big)$ and $t\geqslant 0$ it holds
\begin{align*}
(1+t)^2\mathcal{J}_\beta(t) \leqslant \frac{1}{2} \int_0^t (t-s)^2 \mathcal{G}_\beta(s) \, ds.
\end{align*}
\end{lem}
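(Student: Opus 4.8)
The plan is to unwind the nested definitions of $\mathcal{J}_\beta$ and $\mathcal{H}_\beta$ and reduce the claimed inequality to an elementary estimate on the triple iterated integral, using only the nonnegativity of $\mathcal{G}_\beta$ (which holds since $\Phi_\beta \geqslant 0$ on $Q_1$ by $\psi_\beta \geqslant 1$ from \eqref{asymp psi beta}, and $u$ is supported in $Q_1$ up to the shift by $R<1$). First I would substitute the definition \eqref{definition H beta} of $\mathcal{H}_\beta$ into \eqref{definition J beta} to write
\begin{align*}
\mathcal{J}_\beta(t) = \int_0^t (2+r)^{-3} \int_0^r (r-s)(1+s)\,\mathcal{G}_\beta(s)\, ds\, dr,
\end{align*}
then exchange the order of integration (both integrands are nonnegative, so Tonelli applies) to obtain
\begin{align*}
\mathcal{J}_\beta(t) = \int_0^t (1+s)\,\mathcal{G}_\beta(s) \left( \int_s^t (2+r)^{-3} (r-s)\, dr \right) ds.
\end{align*}

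The crux is then the purely calculus estimate: for $0 \leqslant s \leqslant t$,
\begin{align*}
(1+t)^2 (1+s) \int_s^t (2+r)^{-3}(r-s)\, dr \leqslant \tfrac{1}{2}(t-s)^2.
\end{align*}
To see this, I would bound the inner integral by replacing $(2+r)^{-3}$ with its maximum over $[s,t]$, namely $(2+s)^{-3}$, giving $\int_s^t (2+r)^{-3}(r-s)\,dr \leqslant (2+s)^{-3}\tfrac{(t-s)^2}{2}$. It then remains to check that $(1+t)^2(1+s)(2+s)^{-3} \leqslant 1$, which follows because $1+t \leqslant 2+s + (t-s) \leqslant \dots$; more cleanly, since $s \leqslant t$ one has $1+s \leqslant 2+s$ and also $1+t \leqslant (1+t)$, so one needs $(1+t)^2 \leqslant (2+s)^2$, i.e. $1+t \leqslant 2+s$ — but this need not hold for $t$ much larger than $s$. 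Hence this naive splitting is too lossy and the estimate must be organized differently: instead of pulling out the maximum of $(2+r)^{-3}$, I would keep one factor and integrate, e.g. use $r-s \leqslant r \leqslant 2+r$ to get $(2+r)^{-3}(r-s) \leqslant (2+r)^{-2}$, so that $\int_s^t (2+r)^{-3}(r-s)\,dr \leqslant \int_s^t (2+r)^{-2}\,dr \leqslant (2+s)^{-1} - (2+t)^{-1} = \tfrac{t-s}{(2+s)(2+t)}$; combining, $(1+t)^2(1+s)\cdot \tfrac{t-s}{(2+s)(2+t)} \leqslant \tfrac{(1+t)^2}{(2+t)}\cdot\tfrac{1+s}{2+s}(t-s) \leqslant (1+t)(t-s)$, still not quite matching. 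The right bookkeeping is to exploit $r-s \leqslant t-s$ in one place and $(2+r)^{-3} \leqslant (1+r)^{-3}$, then note $(1+t)^2(1+s) \leqslant (1+r)^{3}$ fails too; the correct route is to bound $(1+t)^2 \leqslant (1+r)(1+t) $ only after first integrating in $r$ up to $t$ and using $(2+r)^{-3}(r-s)(1+s) \leqslant (2+r)^{-1}$ after using $r-s \leqslant r+1 \leqslant 2+r$ \emph{and} $1+s \leqslant 2+r$, yielding $\mathcal{J}_\beta(t) \leqslant \int_0^t \mathcal{G}_\beta(s)\int_s^t (2+r)^{-1}\,dr\,ds$, which is still not of the desired shape.

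Given the delicacy, the cleanest correct plan is the following. After the change of order of integration, fix $s$ and estimate $I(s,t) := \int_s^t (2+r)^{-3}(r-s)\,dr$ by two complementary bounds and interpolate, or more simply, prove directly that $(1+t)^2(1+s) I(s,t) \leqslant \tfrac12 (t-s)^2$ by differentiating both sides in $t$ (for fixed $s$) and comparing; at $t=s$ both sides vanish, so it suffices to show the derivative inequality $2(1+t)(1+s)I(s,t) + (1+t)^2(1+s)(2+t)^{-3}(t-s) \leqslant (t-s)$, and here one uses $I(s,t) \leqslant \int_s^t (2+r)^{-2}\,dr \leqslant (2+s)^{-1}(t-s) \cdot$ — handled case by case for $t-s \leqslant 1$ versus $t-s \geqslant 1$. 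I expect the main obstacle to be precisely this: threading the three polynomial weights $(1+t)^2$, $(1+s)$, and $(2+r)^{-3}$ through the double integration so that exactly a factor $\tfrac12(t-s)^2$ survives, without losing a stray power of $(1+t)$. Once the scalar inequality $(1+t)^2(1+s)I(s,t) \leqslant \tfrac12(t-s)^2$ is established, multiplying by $\mathcal{G}_\beta(s) \geqslant 0$ and integrating in $s \in [0,t]$ yields the lemma immediately.
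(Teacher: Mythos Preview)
Your setup via Fubini is correct and reduces the lemma to the scalar inequality
\[
(1+t)^2(1+s)\int_s^t (2+r)^{-3}(r-s)\,dr \leqslant \tfrac{1}{2}(t-s)^2\qquad (0\leqslant s\leqslant t),
\]
but you do not actually prove this inequality: the proposal cycles through several lossy splittings, each of which fails, and ends with ``once the scalar inequality is established'' without establishing it. That is the gap.

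The fix is a one-line observation you never try: since $(2+r)^{-3}\leqslant (1+r)^{-3}$, it suffices to show the inequality with $(1+r)^{-3}$ in place of $(2+r)^{-3}$, and that version is an \emph{exact identity}. Indeed, a single integration by parts gives
\[
\int_s^t (1+r)^{-3}(r-s)\,dr
= -\tfrac{t-s}{2(1+t)^2}+\tfrac{1}{2}\Big(\tfrac{1}{1+s}-\tfrac{1}{1+t}\Big)
= \frac{(t-s)^2}{2(1+s)(1+t)^2},
\]
so $(1+t)^2(1+s)\int_s^t(1+r)^{-3}(r-s)\,dr=\tfrac12(t-s)^2$ exactly, and your scalar inequality follows. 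All of the ad hoc splittings (pulling out $\sup(2+r)^{-3}$, using $r-s\leqslant 2+r$, differentiating in $t$ and arguing case by case) are unnecessary; they fail precisely because the sharp bound is an identity, so any crude estimate of the integrand loses too much.

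For comparison, the paper's proof is the same computation in dual form: it writes $(t-s)^2\mathcal{G}_\beta(s)=(t-s)^2(1+s)^{-1}\mathcal{H}''_\beta(s)$, integrates by parts twice using $\mathcal{H}_\beta(0)=\mathcal{H}'_\beta(0)=0$, and uses the exact identity $\partial_s^2\big[(t-s)^2(1+s)^{-1}\big]=2(1+t)^2(1+s)^{-3}$ (note the perfect square $(1+s+t-s)^2=(1+t)^2$). This yields $\int_0^t(t-s)^2\mathcal{G}_\beta(s)\,ds=2(1+t)^2\int_0^t(1+s)^{-3}\mathcal{H}_\beta(s)\,ds$, after which $(1+s)^{-3}\geqslant(2+s)^{-3}$ gives the lemma. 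Your Fubini route and the paper's double integration by parts are two faces of the same exact algebraic identity; the only real content is recognising that identity, which your proposal misses.
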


\begin{proof}
Differentiating  twice \eqref{definition H beta}, we have
\begin{align}
\mathcal{H}'_\beta (t) = \int_0^t (1+s) \,\mathcal{G}_\beta(s) \, ds \, , \quad \label{derivative H beta}
\mathcal{H}''_\beta (t) = \ (1+t) \,\mathcal{G}_\beta(t).%\notag
\end{align}
Then, by using integration by parts, since $\mathcal{H}_\beta(0)=\mathcal{H}'_\beta(0)=0$, we get
\begin{align*}
\int_0^t (t-s)^2 \mathcal{G}_\beta(s) \, ds &= \int_0^t (t-s)^2 (1+s)^{-1} \mathcal{H}''_\beta(s) \, ds = \int_0^t \partial_s^2[ (t-s)^2 (1+s)^{-1}] \mathcal{H}_\beta(s) \, ds \\
 &= 2 \int_0^t (1+s)^{-3}(1+t)^2 \mathcal{H}_\beta(s) \, ds \geqslant %2\,  (1+t)^2  \int_0^t (2+s)^{-3}\mathcal{H}_\beta(s) \, ds \\ &= 
 2\,  (1+t)^2 \mathcal{J}_\beta(t),
\end{align*} which is exactly the desired inequality.
\end{proof}

\begin{lem}\label{lemma exact identity}
Let us assume $(f,g)\in H^1(\mathbb{R}^n)\times L^2(\mathbb{R}^n)$ nonnegative, not identically zero, compactly supported such that $$\supp(f), \ \supp(g) \subset B_{R} \ \mbox{and} \quad R<1.$$

Let $u$ be a solution of \eqref{main}. Then, for every $\beta\in \Big(\frac{\sqrt{\delta}-\mu_1+1}{2}, \frac{n-\mu_1+1}{2}\Big)$ such that $\beta\geqslant1-\mu_1$ and $t\geqslant 0$ the following identity holds
\begin{align}
&\varepsilon E_{0,\beta}(f)+\varepsilon E_{1,\beta}(f,g)\, t +\int_0^t (t-s) \mathcal{G}_\beta(s) \, ds \notag \\
& \quad =\int_{\mathbb{R}^n} u(t,x)\Phi_\beta(t,x) \, dx +\int_0^t (1+s)^{-\beta} \int_{\mathbb{R}^n} 
u(s,x) \,\widetilde{\psi}_\beta\Big(\tfrac{|x|^2}{(1+s)^2}\Big)\, dx \, ds,
\label{fundamental identity}
\end{align}
where
\begin{align}\label{definition E1}
E_{1,\beta}(f,g)& := \int_{\mathbb{R}^n}  \Big(g(x)\psi_\beta(|x|^2)+f(x) \big((\beta-1+\mu_1)\psi_\beta(|x|^2) +2|x|^2\psi'_\beta(|x|^2) \big) \Big) \, dx  ,\\
E_{0,\beta}(f)& := \int_{\mathbb{R}^n}  f(x)\psi_\beta(|x|^2)\, dx  
 \label{definition E2}
\end{align} are positive quantities and
\begin{equation}\label{definition psi tilde}
\widetilde{\psi}_\beta(z): =(2\beta+\mu_1-2)\psi_\beta(z)+4 z\,\psi'_\beta(z).
\end{equation}
\end{lem}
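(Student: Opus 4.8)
The plan is to test the equation in \eqref{main} against the function $\Phi_\beta$ built in Proposition \ref{Prop rel PHI and psi}, which solves the adjoint linear equation \eqref{lin eq PHI} in $Q_1$, and then to integrate the resulting relation once more in time so as to generate the weight $(t-s)$ in front of $\mathcal{G}_\beta$. Before that, I would fix the functional setting. By the finite speed of propagation for \eqref{main} (the support property already exploited in Section \ref{Section test func subcrit}), a local energy solution satisfies $\supp u(s,\cdot)\subset B_{s+R}$ for every $s\in[0,T)$; since $R<1$, one has $B_{s+R}\subset\{|x|<1+s\}$ with a gap $1-R>0$ that does not degenerate as $s\to\infty$, so on $\supp u(s,\cdot)$ the ratio $|x|^2/(1+s)^2$ stays bounded away from $1$ locally uniformly in $s$. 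Hence $\Phi_\beta$, $\partial_t\Phi_\beta$, $\nabla_x\Phi_\beta$ and $\Delta_x\Phi_\beta$ are continuous there, and all integrals appearing in \eqref{fundamental identity} are finite. This is exactly where the assumption $R<1$ is needed: it confines $\supp u$ strictly inside the forward light cone emanating from the origin, hence away from the singular point of the Gauss hypergeometric function. Although $\Phi_\beta$ is only $\mathcal{C}^2$ in $x$ and is not compactly supported, it can nevertheless be used in the weak formulation \eqref{def}: multiply it by a spatial cut-off equal to $1$ on a neighbourhood of $\bigcup_{s\in[0,t]}\supp u(s,\cdot)$ and by a temporal cut-off equal to $1$ on $[0,t]$, regularise in $x$, insert into \eqref{def}, and pass to the limit exploiting $u\in C([0,T),H^1(\mathbb{R}^n))\cap C^1([0,T),L^2(\mathbb{R}^n))\cap L^p_{\loc}(\mathbb{R}^n\times[0,T))$. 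I expect this approximation step, rather than any of the computations, to be the main technical obstacle.

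Granting it, I would put $\phi=\Phi_\beta$ in \eqref{def}, integrate by parts in $x$ in the term $\int\nabla u\cdot\nabla\Phi_\beta$ (no boundary contribution, by compact support in $x$) and in $s$ in the terms $\int\tfrac{}{}u_s\,\partial_s\Phi_\beta$ and $\int\tfrac{\mu_1}{1+s}u_s\,\Phi_\beta$. The volume terms then recombine into $\int_0^t\!\int_{\mathbb{R}^n} u\big(\partial_s^2\Phi_\beta-\Delta\Phi_\beta-\partial_s(\tfrac{\mu_1}{1+s}\Phi_\beta)+\tfrac{\mu_2^2}{(1+s)^2}\Phi_\beta\big)\,dx\,ds$, which vanishes since $\Phi_\beta$ solves \eqref{lin eq PHI} and $\supp u\subset Q_1$. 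What survives is the identity
\begin{equation*}
m(t)-m(0)=\int_0^t\mathcal{G}_\beta(s)\,ds,\qquad m(s):=\int_{\mathbb{R}^n}\big(u_s(s,x)\,\Phi_\beta(s,x)-u(s,x)\,\partial_s\Phi_\beta(s,x)+\tfrac{\mu_1}{1+s}\,u(s,x)\,\Phi_\beta(s,x)\big)\,dx,
\end{equation*}
with $\mathcal{G}_\beta$ as in \eqref{definition G beta}. Using $u(0,\cdot)=\varepsilon f$, $u_t(0,\cdot)=\varepsilon g$, $\Phi_\beta(0,x)=\psi_\beta(|x|^2)$ and the expression $\partial_t\Phi_\beta=(1-\beta)(1+t)^{-\beta}\psi_\beta(z)-2(1+t)^{-\beta}z\,\psi'_\beta(z)$ obtained in the proof of Proposition \ref{Prop rel PHI and psi} (with $z=|x|^2/(1+t)^2$), one reads off $m(0)=\varepsilon E_{1,\beta}(f,g)$, with $E_{1,\beta}$ as in \eqref{definition E1}.

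Next I would integrate the last identity over $s\in[0,t]$, rewriting the iterated integral of $\mathcal{G}_\beta\geqslant0$ as $\int_0^t(t-s)\mathcal{G}_\beta(s)\,ds$ via Tonelli's theorem, so that $\int_0^t m(\sigma)\,d\sigma-t\,m(0)=\int_0^t(t-s)\mathcal{G}_\beta(s)\,ds$. In $\int_0^t m(\sigma)\,d\sigma$, one further integration by parts in $\sigma$ in the term $\int u_\sigma\Phi_\beta\,dx$ produces the boundary contribution $\int_{\mathbb{R}^n} u(t,x)\Phi_\beta(t,x)\,dx-\varepsilon E_{0,\beta}(f)$ (recognising $\int f\,\psi_\beta(|x|^2)\,dx=E_{0,\beta}(f)$, cf. \eqref{definition E2}) together with an extra $-\int u\,\partial_\sigma\Phi_\beta$; collecting the three volume terms leaves $\int_0^t\!\int_{\mathbb{R}^n} u\big({-}2\partial_\sigma\Phi_\beta+\tfrac{\mu_1}{1+\sigma}\Phi_\beta\big)\,dx\,d\sigma$, and a short computation based on the above formula for $\partial_\sigma\Phi_\beta$ gives $-2\partial_\sigma\Phi_\beta+\tfrac{\mu_1}{1+\sigma}\Phi_\beta=(1+\sigma)^{-\beta}\,\widetilde\psi_\beta\big(\tfrac{|x|^2}{(1+\sigma)^2}\big)$ with $\widetilde\psi_\beta$ exactly as in \eqref{definition psi tilde}. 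Substituting back and rearranging then yields \eqref{fundamental identity}.

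It remains to verify that $E_{0,\beta}(f)$ and $E_{1,\beta}(f,g)$ are positive. On $\supp f\cup\supp g\subset B_R$ one has $|x|^2\leqslant R^2<1$, so \eqref{asymp psi beta} gives $\psi_\beta(|x|^2)\geqslant1$; moreover, for $\beta$ in the admissible range one has $a,b>0$, so $\psi'_\beta=\tfrac{2ab}{n}F(a+1,b+1;\tfrac{n}{2}+1;\cdot)$ has a nonnegative power series, whence $\psi'_\beta\geqslant0$ on $[0,1)$. Combining these facts with $\beta-1+\mu_1\geqslant0$ (this is where the hypothesis $\beta\geqslant1-\mu_1$ enters) and with the nonnegativity and nontriviality of $f$ and $g$, both $E_{0,\beta}(f)$ and $E_{1,\beta}(f,g)$ are strictly positive, which completes the scheme.
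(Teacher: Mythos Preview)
Your proposal is correct and follows essentially the same route as the paper's proof: both use that $\Phi_\beta$ solves the adjoint equation \eqref{lin eq PHI} on $\supp u\subset Q_1$ to obtain the relation $m(t)-m(0)=\int_0^t\mathcal{G}_\beta(s)\,ds$ (the paper writes it as the equivalent derivative identity $\mathcal{G}_\beta=\tfrac{d}{dt}m$), then integrate once more in time and regroup via $-2\partial_s\Phi_\beta+\tfrac{\mu_1}{1+s}\Phi_\beta=(1+s)^{-\beta}\widetilde{\psi}_\beta$. Your only addition is the explicit discussion of the cut-off/regularisation step needed to insert $\Phi_\beta$ into the weak formulation \eqref{def}, which the paper leaves implicit.
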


\begin{proof}
Due to the property of finite speed of propagation for solutions of strictly hyperbolic equations, for the solution $u$ of the semilinear Cauchy problem \eqref{main} we have $\supp u(t,\cdot) \subset B_{R+t}$ for any $t\geqslant 0$, which implies $\supp u \subset Q_1$, as $R<1$.

For the sake of brevity, we will denote simply $ \Phi_\beta(t,x\, ;\mu_1,\mu_2)\equiv \Phi_\beta(t,x)$.
Then, using \eqref{lin eq PHI}, we have
\begin{align}
\mathcal{G}_\beta(t)% & =\int_{\mathbb{R}^n} \Big(u_{tt}(t,x)-\Delta u(t,x)+\tfrac{\mu_1}{1+t}u_t(t,x)+\tfrac{\mu_2^2}{(1+t)^2}u(t,x)\Big)\Phi_\beta(t,x) \, dx \notag\\
& =\int_{\mathbb{R}^n} \Big(u_{tt}(t,x)-\Delta u(t,x)+\tfrac{\mu_1}{1+t}u_t(t,x)+\tfrac{\mu_2^2}{(1+t)^2}u(t,x)\Big)\Phi_\beta(t,x) \, dx \notag\\ & \quad-  \int_{\mathbb{R}^n} u(t,x)\Big(\partial_t^2 \Phi_\beta(t,x)-\Delta \Phi_\beta(t,x) -\partial_t\Big(\tfrac{\mu_1}{1+t}\Phi_\beta(t,x)\Big)+\tfrac{\mu_2^2}{(1+t)^2}\Phi_\beta(t,x)\Big)\, dx \notag\\
& =\int_{\mathbb{R}^n} \Big(u_{tt}(t,x)+\tfrac{\mu_1}{1+t}u_t(t,x)\Big)\Phi_\beta(t,x) \, dx \notag\\
& \quad-  \int_{\mathbb{R}^n} u(t,x)\Big(\partial_t^2 \Phi_\beta(t,x) -\partial_t\Big(\tfrac{\mu_1}{1+t}\Phi_\beta(t,x)\Big)\Big)\, dx \notag\\
& =\frac{d}{dt}\bigg(\int_{\mathbb{R}^n} \big(u_{t}(t,x)\Phi_\beta(t,x)-u(t,x) \partial_t \Phi_\beta(t,x)\big) \, dx +\tfrac{\mu_1}{1+t} \int_{\mathbb{R}^n} u(t,x)\Phi_\beta(t,x)\, dx \bigg),\label{alternative expression of G beta}
\end{align} where in the second equality we used Green's second identity (the boundary integrals with respect to $x$ disappear due to the support property of $u$). 

Since $\partial_t \Phi_\beta(t,x)=(1+t)^{-\beta} \big((-\beta+1)\psi_\beta(z)-2z\psi'_\beta(z)\big),$ then,
\begin{align*}
\int_{\mathbb{R}^n}  &\big(u_{t}(0,x)\Phi_\beta(0,x)-u(0,x) \partial_t \Phi_\beta(0,x)\big) \, dx +\mu_1 \int_{\mathbb{R}^n} u(0,x)\Phi_\beta(0,x)\, dx  \\
&= \varepsilon \int_{\mathbb{R}^n}  \Big(g(x)\psi_\beta(|x|^2)-f(x) \big((-\beta+1)\psi_\beta(|x|^2) -2|x|^2\psi'_\beta(|x|^2) \big)+\mu_1 f(x)\psi_\beta (|x|^2) \Big) \, dx \\
&= \varepsilon \int_{\mathbb{R}^n}  \Big(g(x)\psi_\beta(|x|^2)+f(x) \big((\beta-1+\mu_1)\psi_\beta(|x|^2) +2|x|^2\psi'_\beta(|x|^2) \big) \Big) \, dx= \varepsilon E_{\beta,1}(f,g).
\end{align*} 

%Using the lower bound for $\beta$, it follows $\beta+\mu_1-1>\beta +\tfrac{\mu_1-1-\sqrt{\delta}}{2}>0 $. Furthermore, 

Since $\psi_\beta(|x|^2)=F(a,b\, ; \tfrac{n}{2}\, ; |x|^2)\geqslant 1$ and
\begin{align*}
\psi'_\beta(|x|^2)=F'\big(a,b\, ; \tfrac{n}{2}\, ; |x|^2\big) = \tfrac{2 ab}{n} F\big(a+1,b+1\, ; \tfrac{n}{2}+1\, ; |x|^2\big)>0
\end{align*} for $|x|<1$ and we required $\beta\geqslant 1-\mu_1$ in the assumptions, then, it results $E_{\beta,1}(f,g)>0$, as $f$ and $g$ are nonnegative.

Integrating \eqref{alternative expression of G beta} over $[0,t]$, we obtain
\begin{align*}
& \varepsilon E_{\beta,1}(f,g)+\int_0^t \mathcal{G}_\beta(s) \, ds \\ 
& \quad= \int_{\mathbb{R}^n} \big(u_{t}(t,x)\Phi_\beta(t,x)-u(t,x) \partial_t \Phi_\beta(t,x)\big) \, dx +\tfrac{\mu_1}{1+t} \int_{\mathbb{R}^n} u(t,x)\Phi_\beta(t,x)\, dx \\ 
& \quad= \frac{d}{dt}\bigg(\int_{\mathbb{R}^n} u(t,x)\Phi_\beta(t,x) \, dx\bigg)-2 \int_{\mathbb{R}^n} 
u(t,x) \partial_t \Phi_\beta(t,x) \, dx+\tfrac{\mu_1}{1+t} \int_{\mathbb{R}^n} u(t,x)\Phi_\beta(t,x)\, dx \\
%& \quad= \frac{d}{dt}\bigg(\int_{\mathbb{R}^n} u(t,x)\Phi_\beta(t,x) \, dx\bigg)+(1+t)^{-\beta} \int_{\mathbb{R}^n} u(t,x) \Big((2\beta+\mu_1-2)\psi_\beta\Big(\tfrac{|x|^2}{(1+t)^2}\Big)+\tfrac{4|x|^2}{(1+t)^2}\psi'_\beta\Big(\tfrac{|x|^2}{(1+t)^2}\Big)\Big)\, dx \\
& \quad= \frac{d}{dt}\bigg(\int_{\mathbb{R}^n} u(t,x)\Phi_\beta(t,x) \, dx\bigg)+(1+t)^{-\beta} \int_{\mathbb{R}^n} 
u(t,x) \,\widetilde{\psi}_\beta\Big(\tfrac{|x|^2}{(1+t)^2}\Big)\, dx,
\end{align*} where $\widetilde{\psi}_\beta$ is given by \eqref{definition psi tilde}.

A further integration over $[0,t]$ and Fubini's theorem provide
\begin{align*}
\varepsilon E_{\beta,1}(f,g)\, t+\int_0^t \int_0^\tau \mathcal{G}_\beta(s) \, ds 
&   =  \varepsilon E_{\beta,1}(f,g)\, t+\int_0^t (t-s)\,\mathcal{G}_\beta(s) \, ds \, d\tau \\ 
&   = \int_{\mathbb{R}^n} u(t,x)\Phi_\beta(t,x) \, dx -\varepsilon\int_{\mathbb{R}^n} f(x)\psi_\beta(|x|^2) \, dx \\ 
& \quad +\int_0^t (1+s)^{-\beta} \int_{\mathbb{R}^n} 
u(s,x) \,\widetilde{\psi}_\beta\Big(\tfrac{|x|^2}{(1+s)^2}\Big)\, dx \, ds,
\end{align*} that is, \eqref{fundamental identity}.
\end{proof}

\begin{lem} \label{lemma fund inequalities}
Let us assume $(f,g)\in H^1(\mathbb{R}^n)\times L^2(\mathbb{R}^n)$ nonnegative, not identically zero, compactly supported such that $$\supp(f), \ \supp(g) \subset B_{R} \ \mbox{and} \quad R<1.$$

Let  $\mu_1, \ \mu_2$ be nonnegative constants  such that $0\leqslant\delta<n^2$ and let $p=p_0(n+\mu_1)$ be such that $p>\frac{2}{n-\sqrt{\delta}}$.
\begin{enumerate}
\item[\rm{(i)}] Let $q>p$ satisfy 
\begin{align}\label{beta q cond}
\beta_q =\tfrac{n-\mu_1+1}{2}-\tfrac{1}{q}. %> \tfrac{\sqrt{\delta}-\mu_1+1}{2}.
\end{align}
Then,
\begin{align*}
& \varepsilon E_{0,\beta_q}+\varepsilon E_{1,\beta_q}\, t +\int_0^t (t-s) \,\mathcal{G}_{\beta_q}(s) \, ds \\
& \quad \leqslant C_1 \bigg(  (1+t)^{\frac{n}{p'}-\beta_q+1}\| u(t,\cdot)\|_{L^p(\mathbb{R}^n)}+\int_0^t  (1+s)^{\frac{n}{p'}-\beta_p} \| u(s,\cdot)\|_{L^p(\mathbb{R}^n)} \, ds  \bigg).
\end{align*}
\item[\rm{(ii)}] Let $p=q$. If $\beta_p$ is defined by \eqref{beta q cond}, then,
\begin{align*}
& \int_0^t (t-s) \,\mathcal{G}_{\beta_p}(s) \, ds \\
& \quad \leqslant C_1 \bigg(  (1+t)^{\frac{n}{p'}-\beta_p+1}\| u(t,\cdot)\|_{L^p(\mathbb{R}^n)}+\int_0^t  (1+s)^{\frac{n}{p'}-\beta_p}\big(\log(2+s)\big)^{\frac{1}{p'}} \| u(s,\cdot)\|_{L^p(\mathbb{R}^n)} \, ds  \bigg).
\end{align*}
\end{enumerate}
Here the constant $C_1>0$ does not depend on $(t,x)$ and $u$.
\end{lem}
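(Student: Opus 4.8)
The starting point is the exact identity \eqref{fundamental identity} from Lemma \ref{lemma exact identity}, which is available once we check that the chosen $\beta_q$ (resp. $\beta_p$) lies in the admissible range $\big(\tfrac{\sqrt{\delta}-\mu_1+1}{2},\tfrac{n-\mu_1+1}{2}\big)$ and satisfies $\beta\geqslant 1-\mu_1$. For $\beta_q=\tfrac{n-\mu_1+1}{2}-\tfrac{1}{q}$ with $q>p=p_0(n+\mu_1)$, the upper bound $\beta_q<\tfrac{n-\mu_1+1}{2}$ is immediate, the lower bound follows from $p>\tfrac{2}{n-\sqrt\delta}$ (which forces $\tfrac1p<\tfrac{n-\sqrt\delta}{2}$, hence $\tfrac1q<\tfrac{n-\sqrt\delta}{2}$), and $\beta_q\geqslant 1-\mu_1$ reduces to $\tfrac1q\leqslant\tfrac{n+\mu_1-1}{2}$, which holds in the relevant dimensions since $q>p>1$. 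With these checks in place the identity \eqref{fundamental identity} says precisely that
\[
\varepsilon E_{0,\beta_q}(f)+\varepsilon E_{1,\beta_q}(f,g)\,t+\int_0^t (t-s)\,\mathcal{G}_{\beta_q}(s)\,ds
=\int_{\mathbb{R}^n} u(t,x)\Phi_{\beta_q}(t,x)\,dx+\int_0^t (1+s)^{-\beta_q}\!\int_{\mathbb{R}^n} u(s,x)\,\widetilde{\psi}_{\beta_q}\!\Big(\tfrac{|x|^2}{(1+s)^2}\Big)dx\,ds,
\]
so it suffices to bound the two terms on the right-hand side by the claimed expression involving $\|u(t,\cdot)\|_{L^p}$.

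**Estimating the boundary term.** For the first term I apply Hölder's inequality in $x$ over the ball $B_{R+s}$ (using $\supp u(s,\cdot)\subset B_{R+s}$):
\[
\Big|\int_{\mathbb{R}^n} u(t,x)\Phi_{\beta_q}(t,x)\,dx\Big|\leqslant \|u(t,\cdot)\|_{L^p(\mathbb{R}^n)}\Big(\int_{|x|\leqslant R+t}\Phi_{\beta_q}(t,x)^{p'}\,dx\Big)^{1/p'}.
\]
Now $\Phi_{\beta_q}(t,x)=(1+t)^{1-\beta_q}\psi_{\beta_q}\big(\tfrac{|x|^2}{(1+t)^2}\big)$, and by part (i) of the asymptotics lemma $\psi_{\beta_q}$ is bounded above by $C'$ on $[0,1)$ (the condition $\beta_q<\tfrac{n-\mu_1+1}{2}$ is exactly what the lemma needs, and $\beta_q>\tfrac{\sqrt\delta-\mu_1+1}{2}$ gives $a,b>0$). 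Hence $\Phi_{\beta_q}(t,x)^{p'}\lesssim (1+t)^{(1-\beta_q)p'}$ on the support, and integrating over a ball of radius $R+t\lesssim 1+t$ costs a factor $(1+t)^n$; taking the $1/p'$ power yields $(1+t)^{1-\beta_q+n/p'}$, which is the first term of the claimed bound. This part is identical for $q=p$, so both (i) and (ii) share it.

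**Estimating the ``error'' term — the main obstacle.** The delicate term is $\int_0^t (1+s)^{-\beta_q}\int_{\mathbb{R}^n} u(s,x)\,\widetilde{\psi}_{\beta_q}\big(\tfrac{|x|^2}{(1+s)^2}\big)dx\,ds$. Again Hölder in $x$ gives the factor $\|u(s,\cdot)\|_{L^p}$ times $\big(\int_{|x|\leqslant R+s}\widetilde{\psi}_{\beta_q}(\tfrac{|x|^2}{(1+s)^2})^{p'}dx\big)^{1/p'}$, so everything hinges on the size of $\widetilde{\psi}_{\beta_q}(z)=(2\beta_q+\mu_1-2)\psi_{\beta_q}(z)+4z\psi'_{\beta_q}(z)$ near $z=1$. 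Here I use part (ii) of the asymptotics lemma: since $\beta_q>\tfrac{n-\mu_1-1}{2}$ (because $\tfrac1q$ is small), $|\psi'_{\beta_q}(z)|\approx (1-\sqrt z)^{\frac{n-\mu_1-1}{2}-\beta_q}$, and with $\beta_q=\tfrac{n-\mu_1+1}{2}-\tfrac1q$ the exponent is $\tfrac{n-\mu_1-1}{2}-\beta_q=-1+\tfrac1q$, i.e. $\widetilde{\psi}_{\beta_q}(z)\lesssim (1-\sqrt z)^{-1+1/q}$ for $z$ near $1$ (the bounded $\psi_{\beta_q}$ term is harmless). Substituting $z=\tfrac{r^2}{(1+s)^2}$, writing $1-\sqrt z\sim \tfrac{1+s-r}{1+s}$ on the relevant annulus, and integrating $r^{n-1}\,dr$ over $[0,R+s]$: the power $p'(-1+1/q)$ is integrable in $r$ near $r=1+s$ precisely when $p'(1-1/q)<1$, i.e. $q>p'$ — and for $q>p$ this is automatic when $p\leqslant 2$ while for general $p$ one checks it follows from $q>p$ together with $p=p_0(n+\mu_1)$; the surviving $r$-integral contributes $(1+s)^{n+p'(-1+1/q)+1}$ up to constants, and collecting the $(1+s)$-powers (including $(1+s)^{-\beta_q p'}$ from the prefactor and $(1+s)^{-p'(-1+1/q)}$ from rescaling $1-\sqrt z$) gives, after taking the $1/p'$ power, the weight $(1+s)^{n/p'-\beta_p}$. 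In case (ii), $q=p=p'$ is exactly the borderline $p'(1-1/q)=1$ where the $r$-integral diverges logarithmically, producing the extra factor $(\log(2+s))^{1/p'}$ — this is the one place where the two parts genuinely differ, and it is the crux of why the critical exponent forces a logarithm. A careful bookkeeping of these exponents (and a separate, trivial estimate on the region $z\leqslant 1/2$ where $\widetilde\psi_{\beta_q}$ is bounded) completes the proof; the main obstacle is getting the power of $(1+s)$ exactly right through the change of variables and confirming the integrability threshold $q\geqslant p'$ under the standing hypotheses.
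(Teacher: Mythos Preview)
Your overall strategy coincides with the paper's: invoke the identity \eqref{fundamental identity}, estimate the first right-hand term by H\"older and the uniform bound \eqref{asymp psi beta}, and estimate the second by H\"older together with the blow-up rate \eqref{asymp psi' beta} for $\psi'_{\beta_q}$ near $z=1$, then distinguish $q>p$ from $q=p$ according to whether the resulting radial integral grows polynomially or logarithmically in $s$.

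Where your sketch goes astray is in the analysis of that radial integral. You write that $(1-\sqrt z)^{p'(-1+1/q)}$ is integrable near the endpoint ``precisely when $p'(1-1/q)<1$, i.e.\ $q>p'$''. The calculus statement is fine, but the algebra is not: $p'(1-1/q)<1$ is equivalent to $q<p$, not to $q>p'$. In the regime of the lemma one has $q\geqslant p$, hence $p'/q'\geqslant 1$, and the limiting integral \emph{diverges}. This is not an obstacle: because $R<1$, the rescaled radial variable only reaches $\rho=(R+s)/(1+s)<1$, and the near-endpoint behaviour is precisely what generates the growth
\[
\int_0^{\frac{R+s}{1+s}}(1-\rho)^{-p'/q'}\,d\rho \ \approx\
\begin{cases}
\big(\tfrac{1+s}{1-R}\big)^{p'/q'-1} & \text{if } q>p,\\[1mm]
\log\!\big(\tfrac{1+s}{1-R}\big) & \text{if } q=p.
\end{cases}
\]
Combined with the $(1+s)^n$ from the volume element, the $x$-integral has size $(1+s)^{n+p'/q'-1}$ (resp.\ $(1+s)^{n}\log(2+s)$). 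Your claimed contribution $(1+s)^{n+p'(-1+1/q)+1}$ has the sign of the singular exponent reversed, and the remark that ``$q=p=p'$ is exactly the borderline'' is also off: the borderline is simply $q=p$ (i.e.\ $p'/q'=1$), with no requirement that $p=2$. With these corrections the exponent collapses via $-\beta_q+\tfrac{n}{p'}+\tfrac{1}{q'}-\tfrac{1}{p'}=\tfrac{n}{p'}-\beta_p$, which is exactly the paper's computation.
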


\begin{rem}\label{rmk beta q range} Let us point out that the condition $p>\frac{2}{n-\sqrt{\delta}}$ implies $$\beta_q \geqslant \beta_p > \tfrac{\sqrt{\delta}-\mu_1+1}{2}.$$ for $q\geqslant p$.
 Moreover, the condition $\beta_p\geqslant 1-\mu_1$ is always true for $p=p_0(n+\mu_1)$. Indeed, $\beta_p\geqslant 1-\mu_1$ is equivalent to require $$\tfrac{1}{p}\leqslant \tfrac{n+\mu_1-1}{2}.$$
Besides, $p$ solves the quadratic equation  $\gamma(p,n+\mu_1)=0$.
%\begin{align*}
%\tfrac{n+\mu_1-1}{2}p^2-\tfrac{n+\mu_1+1}{2}p-1=0,
%\end{align*} and, hence,
Therefore,
\begin{align*}
\tfrac{1}{p}=\tfrac{n+\mu_1-1}{2}p-\tfrac{n+\mu_1+1}{2}\leqslant \tfrac{n+\mu_1-1}{2} \quad\ \mbox{if and only if} \quad p \leqslant \tfrac{2(n+\mu_1)}{n+\mu_1-1}.
\end{align*} Being $p=p_0(n+\mu_1)$, a straightforward calculation shows that the last inequality is always fulfilled by nonnegative parameters $\mu_1$.
\end{rem}

\begin{proof}
By \eqref{fundamental identity}, using again the finite speed of propagation property, we may write
\begin{align}
\varepsilon E_{0,\beta_q}(f)+\varepsilon E_{1,\beta_q}(f,g)\, t +\int_0^t (t-s) \,\mathcal{G}_{\beta_q}(s) \, ds = \mathcal{I}_{\beta_q,1}(t)+\int_0^t\mathcal{I}_{\beta_q,2}(s)\, ds, \label{fundamental estimate q}
\end{align} where
\begin{align*}
\mathcal{I}_{\beta_q,1}(t) & := \int_{B_{R+t}} u(t,x)\Phi_{\beta_q}(t,x) \, dx, \\ 
\mathcal{I}_{\beta_q,2}(t) &: =  (1+t)^{-\beta_q} \int_{B_{R+t}} 
u(t,x) \,\widetilde{\psi}_{\beta_q}\Big(\tfrac{|x|^2}{(1+t)^2}\Big)\, dx.
\end{align*}
Let us point out explicitly that, according to Remark \ref{rmk beta q range} we have that the assumptions on $p$ imply $\beta_q\in\Big(\tfrac{\sqrt{\delta}-\mu_1+1}{2}, \tfrac{n-\mu_1+1}{2}\Big)$ and $\beta_q\geqslant 1-\mu_1$. For this reason, we may use Lemma \ref{lemma exact identity} in order to derive \eqref{fundamental estimate q}.
For $\beta_q\in\Big(\tfrac{\sqrt{\delta}-\mu_1+1}{2}, \tfrac{n-\mu_1+1}{2}\Big)$, as the hypergeometric function in \eqref{definition PSI} is uniformly bounded, we can estimate $\Phi_{\beta_q}(t,x)\approx (1+t)^{-\beta_q+1}$ according to  \eqref{asymp psi beta}. Therefore, if we denote by $p'$ the conjugate exponent of $p$, H\"older inequality implies
\begin{align}
\mathcal{I}_{\beta_q,1}(t) &\leqslant \bigg(\int_{B_{R+t}} |u(t,x)|^p \, dx\bigg)^{\frac{1}{p}} \bigg(\int_{B_{R+t}} \Phi_{\beta_q}(t,x)^{p'} \, dx\bigg)^{\frac{1}{p'}} \notag \\ &\leqslant C_1 (1+t)^{\frac{n}{p'}-\beta_q+1} \|u(t,\cdot)\|_{L^p(\mathbb{R}^n)},\label{estimate I beta 1}
\end{align} where throughout this proof $C_1=C_1(n,p,\mu_1,\mu_2,\beta, R)>0$ is a suitable constant that may change from line to line.

Let us estimate now the term $\mathcal{I}_{\beta_q,2}(s)$.
We remark that for $\beta_q$ as in \eqref{beta q cond}, then, $\beta_q> \tfrac{n-\mu_1-1}{2}$, since it is $q>1$. Therefore, in order to estimate $\psi'_{\beta_q}$ we may use \eqref{asymp psi' beta}. As we underlined in the previous case, due to the assumption on $\beta_q$, the function $\psi_{\beta_q}$ is uniformly bounded. Thus, in \eqref{definition psi tilde} the dominant term as $z\to 1^-$ is the derivative.  Hence,
\begin{align*}
|\widetilde{\psi}_{\beta_q}(z)|\leqslant C_1  (1-\sqrt{z})^{\frac{n-\mu_1-1}{2}-\beta_q} \qquad \mbox{for} \ z \in[0,1).
\end{align*}
Consequently, by using H\"older inequality, for $\mathcal{I}_{\beta,2}(s)$ we get
\begin{align*}
\mathcal{I}_{\beta_q,2}(s) & \leqslant (1+s)^{-\beta_q}  \bigg(\int_{B_{R+s}} |u(s,x)|^p \, dx\bigg)^{\frac{1}{p}} \bigg(\int_{B_{R+s}} |\widetilde{\psi}_{\beta_q}(s,x)|^{p'}\, dx\bigg)^{\frac{1}{p'}} \\
& \leqslant C_1 (1+s)^{-\beta_q}   \bigg(\int_{B_{R+s}} \Big(1-\tfrac{|x|}{1+s}\Big)^{(\frac{n-\mu_1-1}{2}-\beta_q)p'}\, dx\bigg)^{\frac{1}{p'}} \|u(s,\cdot)\|_{L^p(\mathbb{R}^n)}\\
& = C_1 (1+s)^{-\beta_q}   \bigg(\int_{B_{R+s}} \Big(1-\tfrac{|x|}{1+s}\Big)^{(\frac{1}{q}-1)p'}\, dx\bigg)^{\frac{1}{p'}} \|u(s,\cdot)\|_{L^p(\mathbb{R}^n)}\\
& = C_1 (1+s)^{-\beta_q}   \bigg(\int_{B_{R+s}} \Big(1-\tfrac{|x|}{1+s}\Big)^{-\frac{p'}{q'}}\, dx\bigg)^{\frac{1}{p'}} \|u(s,\cdot)\|_{L^p(\mathbb{R}^n)},
\end{align*} where $q'$ denotes the conjugate exponent of $q$.

Using polar coordinates, we get 
\begin{align*}
\int_{B_{R+s}} \Big(1-\tfrac{|x|}{1+s}\Big)^{-\frac{p'}{q'}}\, dx & = \omega_{n-1}\int_0^{R+s} \Big(1-\tfrac{r}{1+s}\Big)^{-\frac{p'}{q'}}r^{n-1}\, dr \\& = \omega_{n-1}(1+s)^n \int_0^{\frac{R+s}{1+s}} (1-\rho)^{-\frac{p'}{q'}}\rho^{n-1}\, d\rho,
\end{align*} where $\omega_{n-1}$ is the measure of the unitary sphere $\partial B_1$. Also,
\begin{align*}
\mathcal{I}_{\beta_q,2}(s) & \leqslant C_1 (1+s)^{-\beta_q+\frac{n}{p'}}   \bigg(\int_0^{\frac{R+s}{1+s}} (1-\rho)^{-\frac{p'}{q'}}\rho^{n-1}\, d\rho \bigg)^{\frac{1}{p'}} \|u(s,\cdot)\|_{L^p(\mathbb{R}^n)} \\
& \leqslant C_1 (1+s)^{-\beta_q+\frac{n}{p'}}   \bigg(\int_0^{\frac{R+s}{1+s}} (1-\rho)^{-\frac{p'}{q'}}\, d\rho \bigg)^{\frac{1}{p'}} \|u(s,\cdot)\|_{L^p(\mathbb{R}^n)}\\
& \leqslant C_1 (1+s)^{-\beta_q+\frac{n}{p'}}   \|u(s,\cdot)\|_{L^p(\mathbb{R}^n)}
\begin{cases} (1-\tfrac{R+s}{1+s})^{-\frac{1}{q'}+\frac{1}{p'}} & \mbox{if} \ q>p , \\
\big(-\log(1-\tfrac{R+s}{1+s})\big)^{\frac{1}{p'}} & \mbox{if} \ q=p ,
\end{cases} \\
& \leqslant 
\begin{cases} C_1 (1+s)^{-\beta_q+\frac{n}{p'}+\frac{1}{q'}-\frac{1}{p'}} \|u(s,\cdot)\|_{L^p(\mathbb{R}^n)} & \mbox{if} \ q>p , \\
 C_1 (1+s)^{-\beta_p+\frac{n}{p'}}\big(\log(2+s)\big)^{\frac{1}{p'}}   \|u(s,\cdot)\|_{L^p(\mathbb{R}^n)} & \mbox{if} \ q=p.
\end{cases}
\end{align*} 

Since $-\beta_q+\frac{n}{p'}+\frac{1}{q'}-\frac{1}{p'}=\frac{n}{p'}+1-\tfrac{n-\mu_1+1}{2}-\frac{1}{p'}=\frac{n}{p'}-\beta_p$, integrating  $\mathcal{I}_{\beta,2}(s)$ over $[0,t]$, we find
\begin{align}
\int_0^t \mathcal{I}_{\beta_q,2}(s) \, ds & \leqslant C_1
\begin{cases} \displaystyle{ \int_0^t (1+s)^{\frac{n}{p'}-\beta_p} \|u(s,\cdot)\|_{L^p(\mathbb{R}^n)} \, ds } & \mbox{if} \ q>p , \\
\displaystyle{ \int_0^t (1+s)^{\frac{n}{p'}-\beta_p}\big(\log(2+s)\big)^{\frac{1}{p'}}   \|u(s,\cdot)\|_{L^p(\mathbb{R}^n)}\, ds } & \mbox{if} \ q=p.
\end{cases} \label{estimate I beta 2}
\end{align}

Due to the assumptions on $(f,g)$, we have $E_{0,\beta_q}(f)>0$ and $E_{1,\beta_q}(f,g)>0$. Then, combining \eqref{estimate I beta 1} and \eqref{estimate I beta 2},  from \eqref{fundamental estimate q} we get the desired estimates in the cases $q>p$ and $q=p$. 
\end{proof}

\section{Proof of Theorem \ref{thm critical case} } \label{Section proof thm crit}

\begin{proof}
Let us consider $\beta_{p+\sigma}=\tfrac{n+\mu_1-1}{2}-\tfrac{1}{p+\sigma}$ for $p=p_0(n+\mu_1)$, where $\sigma$ is a positive constant. Being $\beta_q$ increasing with respect to $q$, if we assume $p>\tfrac{2}{n-\sqrt{\delta}}$, then, $\beta_{p+\sigma}>\beta_p>\tfrac{\sqrt{\delta}-\mu_1+1}{2}$ and we can apply Lemma \ref{lemma fund inequalities}.

From Lemma \ref{lemma fund inequalities} (i) it follows
\begin{align*}
& \varepsilon E_{0,\beta_{p+\sigma}}(f)+\varepsilon E_{1,\beta_{p+\sigma}}(f,g)\, t \\ 
& \quad \leqslant C_1 \bigg(  (1+t)^{\frac{n}{p'}-\beta_{p+\sigma}+1}\| u(t,\cdot)\|_{L^p(\mathbb{R}^n)}+\int_0^t  (1+s)^{\frac{n}{p'}-\beta_p} \| u(s,\cdot)\|_{L^p(\mathbb{R}^n)} \, ds  \bigg)\\ 
& \quad \leqslant C_1 \bigg(  (1+t)^{\frac{n+1-\beta_p}{p'}+\beta_{p}-\beta_{p+\sigma}}(\mathcal{G}_{\beta_p}(t))^{\frac{1}{p}}+\int_0^t  (1+s)^{\frac{n+1-\beta_p}{p'}-1} (\mathcal{G}_{\beta_p}(s))^{\frac{1}{p}} \, ds  \bigg).
\end{align*}
Let us underline that  $p=p(n+\mu_1)$ implies $\frac{n+1-\beta_p}{p'}=1+\frac{1}{p}$. Indeed, 
\begin{align*}
\tfrac{n+1-\beta_p}{p'}&=\Big(n+1-\Big(\tfrac{n-\mu_1+1}{2}-\tfrac{1}{p}\Big)\Big)\Big(1-\tfrac{1}{p}\Big) 
 =\tfrac{1}{p^2}\Big(\tfrac{n+\mu_1+1}{2}p+1\Big)(p-1) 
 \\ &=\tfrac{1}{p^2}\Big(\tfrac{n+\mu_1+1}{2}p^2-\tfrac{n+\mu_1+1}{2}p+p-1\Big)
   =\tfrac{1}{p^2}\Big(p^2+p-\underbrace{\tfrac{\gamma(p,n+\mu_1)}{2}}_{=0}\Big)
 = \tfrac{p+1}{p}.
\end{align*}
Then,  integrating the preceding inequality over $[0,t]$ and applying Fubini's theorem and H\"older inequality, we arrive at 
\begin{align*}
& \varepsilon E_{0,\beta_{p+\sigma}}(f)\, t+\tfrac{\varepsilon}{2} E_{1,\beta_{p+\sigma}}(f,g)\, t^2 \\ 
& \quad \leqslant C_1 \bigg(  \int_0^t (1+s)^{1+\frac{1}{p}+\beta_{p}-\beta_{p+\sigma}}(\mathcal{G}_{\beta_p}(s))^{\frac{1}{p}} \, ds+\int_0^t  (t-s)(1+s)^{\frac{1}{p}} (\mathcal{G}_{\beta_p}(s))^{\frac{1}{p}} \, ds  \bigg) \\ 
& \quad \leqslant C_1 \bigg(  \int_0^t (1+s) \mathcal{G}_{\beta_p}(s) \, ds\bigg)^{\frac{1}{p}}\Bigg[\Bigg(  \int_0^t (1+s)^{p'+(\beta_{p}-\beta_{p+\sigma})p'}\, ds \bigg)^{\frac{1}{p'}} +\bigg(\int_0^t  (t-s)^{p'}  \, ds  \bigg)^{\frac{1}{p'}}  \Bigg] \\ 
& \quad \leqslant C_2 \bigg(  \int_0^t (1+s) \mathcal{G}_{\beta_p}(s) \, ds\bigg)^{\frac{1}{p}}(1+t)^{1+\frac{1}{p'}}.
\end{align*}
From \eqref{derivative H beta}, we get 
\begin{align*}
 \varepsilon E_{0,\beta_{p+\sigma}}(f)\, t+\tfrac{\varepsilon}{2} E_{1,\beta_{p+\sigma}}(f,g)\, t^2  \leqslant C_2 \mathcal{H}'_{\beta_p}(t)^{\frac{1}{p}}(1+t)^{1+\frac{1}{p'}},
\end{align*} which implies for $t\geqslant1$
\begin{align*}
\mathcal{H}'_{\beta_p}(t)  & \geqslant C_2^{-p}\varepsilon^p \Big(E_{0,\beta_{p+\sigma}}(f)\, t+\tfrac{1}{2} E_{1,\beta_{p+\sigma}}(f,g)\, t^2  \Big)^p (1+t)^{1-2p} \notag\\
 & \geqslant C_3 \varepsilon^p  (1+t).%\label{lower bound H' beta}
\end{align*}
As the functional $H_{\beta_p}$ is nonnegative, from the  previous inequality we get for $t\geqslant 2$
\begin{align}
\mathcal{H}_{\beta_p}(t) \geqslant \int_1^t \mathcal{H}'_{\beta_p}(s) \, ds  \geqslant C_3 \varepsilon^p \int_1^t (1+s) \, ds \geqslant  C_4 \varepsilon^p  (1+t)^2. \label{lower bound H beta}
\end{align}

By using Lemma \ref{lemma fund inequalities} (ii), due to $E_{0,\beta_{p}}(f), E_{1,\beta_{p}}(f,g)\geqslant 0$  we have 
\begin{align*}
& \int_0^t (t-s) \,\mathcal{G}_{\beta_p}(s) \, ds \\
& \quad \leqslant C_1 \bigg(  (1+t)^{\frac{n}{p'}-\beta_p+1}\| u(t,\cdot)\|_{L^p(\mathbb{R}^n)}+\int_0^t  (1+s)^{\frac{n}{p'}-\beta_p}\big(\log(2+s)\big)^{\frac{1}{p'}} \| u(s,\cdot)\|_{L^p(\mathbb{R}^n)} \, ds  \bigg) \\
& \quad \leqslant C_1 \bigg(  (1+t)^{\frac{n+1-\beta_p}{p'}}(\mathcal{G}_{\beta_p}(t))^{\frac{1}{p}}+\int_0^t  (1+s)^{\frac{n+1-\beta_p}{p'}-1}\big(\log(2+s)\big)^{\frac{1}{p'}} (\mathcal{G}_{\beta_p}(s))^{\frac{1}{p}} \, ds  \bigg).
\end{align*}
Integrating over $[0,t]$ and using again the equality $\frac{n+1-\beta_p}{p'}=1+\frac{1}{p}$, H\"older inequality and \eqref{derivative H beta}, we find
\begin{align*}
& \frac{1}{2}\int_0^t (t-s)^2 \,\mathcal{G}_{\beta_p}(s) \, ds \\
& \quad \leqslant C_1 \bigg( \int_0^t (1+s)^{1+\frac{1}{p}}(\mathcal{G}_{\beta_p}(s))^{\frac{1}{p}} \, ds+\int_0^t (t-s) (1+s)^{\frac{1}{p}}\big(\log(2+s)\big)^{\frac{1}{p'}} (\mathcal{G}_{\beta_p}(s))^{\frac{1}{p}} \, ds  \bigg) \\
& \quad \leqslant C'_2 \bigg( \int_0^t (1+s)\mathcal{G}_{\beta_p}(s)\, ds \bigg)^{\frac{1}{p}} \Bigg[\bigg( \int_0^t (1+s)^{p'}\, ds\bigg)^{\frac{1}{p'}}+\bigg(\int_0^t (t-s)^{p'} \log(2+s) \, ds  \bigg)^{\frac{1}{p'}} \Bigg] \\
& \quad \leqslant C'_3 \bigg( \int_0^t (1+s)\mathcal{G}_{\beta_p}(s)\, ds \bigg)^{\frac{1}{p}} (1+t)^{1+\frac{1}{p'}} \big(\log(2+t)\big)^{\frac{1}{p'}} \\ 
& \quad  \leqslant C'_3 \big(\mathcal{H}'_{\beta_p}(t)\big)^{\frac{1}{p}} (2+t)^{\frac{2p-1}{p}} \big(\log(2+t)\big)^{\frac{1}{p'}}.
\end{align*}

From Lemma \ref{lemma relation G beta and J beta}, we get
\begin{align*}
(1+t)^2 \mathcal{J}_{\beta_p}(t) \leqslant C'_3 \big(\mathcal{H}'_{\beta_p}(t)\big)^{\frac{1}{p}} (2+t)^{\frac{2p-1}{p}} \big(\log(2+t)\big)^{\frac{1}{p'}},
\end{align*} and, hence, for $t\geqslant 2$ we have
\begin{align}\label{lower bound H'}
C'_4  \big(\log(2+t)\big)^{1-p} \big(\mathcal{J}_{\beta_p}(t)\big)^p \leqslant \mathcal{H}'_{\beta_p}(t)(2+t)^{-1}.
\end{align}
By the definition of $\mathcal{J}_{\beta_p}$, it follows immediately $(2+t)^3\mathcal{J}'_{\beta_p}(t)=\mathcal{H}_{\beta_p}(t)$ which implies
\begin{align*}
(2+t)^3\mathcal{J}''_{\beta_p}(t)+3(2+t)^2\mathcal{J}'_{\beta_p}(t)=\mathcal{H}'_{\beta_p}(t).
\end{align*}

Combining the previous identity with \eqref{lower bound H'}, we have
\begin{align}
(2+t)^2\mathcal{J}''_{\beta_p}(t)+3(2+t)\mathcal{J}'_{\beta_p}(t) \geqslant C'_4  \big(\log(2+t)\big)^{1-p} \big(\mathcal{J}_{\beta_p}(t)\big)^p.\label{fund rel J}
\end{align}
%\textbf{From this point I just follow the standard approach from \cite{ZhouHan14}.}
Moreover, from  \eqref{lower bound H beta}, we get for $t\geqslant 2$ and for a suitable constant $c_0>0$
\begin{align}
\mathcal{J}_{\beta_p}(t)&\geqslant C'_4 \varepsilon^p \int_0^t (2+s)^{-3} (1+s)^2 \, ds \geqslant c_0 \, \varepsilon^p \log(2+t) \label{lower bound J beta}, \\ 
\mathcal{J}'_{\beta_p}(t)& \geqslant C'_4 \varepsilon^p (2+t)^{-3} (1+t)^2 \geqslant c_0 \, \varepsilon^p (2+t)^{-1}. \label{lower bound J' beta}
\end{align}

Let us set $2+t= \exp(\tau)$. Let $\mathcal{J}_0(\tau)$ denote the functional $\mathcal{J}_{\beta_p}(t)$ with respect to the new variable, that is, $\mathcal{J}_0(\tau)=\mathcal{J}_{\beta_p}(\exp(\tau)-2)=\mathcal{J}_{\beta_p}(t)$. Then,
\begin{align*}
\mathcal{J}_0'(\tau)& %=\frac{dt}{d\tau}\mathcal{J}'_{\beta_p}(t)=\exp(\tau)\, \mathcal{J}'_{\beta_p}(t)
= (2+t)\mathcal{J}'_{\beta_p}(t),\\
\mathcal{J}_0''(\tau)& %=\exp(2\tau)\, \mathcal{J}''_{\beta_p}(t)+\exp(\tau)\, \mathcal{J}'_{\beta_p}(t)
=(2+t)^2\mathcal{J}''_{\beta_p}(t)+(2+t)\mathcal{J}'_{\beta_p}(t).
\end{align*}
So, by using \eqref{fund rel J}, \eqref{lower bound J beta} and \eqref{lower bound J' beta}, we find that $\mathcal{J}_0(\tau)$ satisfies for  $\tau\geqslant \log 4$
\begin{equation}\label{J_0}
\begin{cases}
\mathcal{J}_0''(\tau)+2\mathcal{J}_0'(\tau)>C'_4\tau^{1-p}\mathcal{J}_0^p(\tau),\\
\mathcal{J}_0(\tau)\geqslant c_0\varepsilon^p \tau,\\
\mathcal{J}_0'(\tau)\geqslant c_0\varepsilon^p.
\end{cases}
\end{equation}

%Setting $$\mathcal{J}_1(\tau)=\varepsilon^{p^2-2p}\mathcal{J}_0(\varepsilon^{-p(p-1)}\tau),$$
%then, it follows
%\begin{equation} \label{J_1}
%\begin{cases}
%\varepsilon^{p(p-1)}\mathcal{J}_1''(\tau)+2\,\mathcal{J}_1'(\tau)> C_4 \tau^{1-p}\mathcal{J}_1^p(\tau),\\
%\mathcal{J}_1(\tau)\geqslant c_0\tau,\\
%\mathcal{J}'_1(\tau)\geqslant c_0 .
%\end{cases}
%\end{equation}

Employing \cite[Lemma 3.1 (ii)]{IS17} (see also \cite{ZH14}, where this comparison principle for ordinary differential inequalities is originally stated and proved), we get that the function $\mathcal{J}_0(\tau)$ blows up in finite time before $\tau= C\varepsilon^{-p(p-1)}$ for some constant $C>0$. % $C=C(n,p,\mu_1,\mu_2,\beta, R,f,g)>0$ which is independent of $\varepsilon$. 
Also, $\mathcal{J}_{\beta_p}(t)$ blows up before $t=\exp(C\varepsilon^{-p(p-1)})-2$. According to what we have said in  Remark \ref{rmk lifespans}, we have found for the lifespan $T$ of $u$ the upper bound \eqref{upper bound lifespan}. This concludes the proof of Theorem \ref{thm critical case}.
\end{proof}

\begin{rem}\label{rmk p> 2/n-sqrt delta}
Let us explain the restriction $p>\frac{2}{n-\sqrt{\delta}}$ in Theorem \ref{thm critical case}. Although it turns out as a technical condition coming from the inequality $\beta_p>\frac{\sqrt{\delta}-\mu_1+1}{2}$, in the massless case  ($\mu^2_2=0$) it is equivalent to require $\mu <\frac{n^2+n+2}{n+2}$, which is exactly the restriction on $\mu_1$ in \cite{IS17}. Furthermore, for $n\geqslant 3$ and $\delta<(n-2)^2$ this condition is always fulfilled. In particular, for high dimensions, namely for $n\geqslant 4$, we have an improvement in the range for $\delta$ for which we can prove a blow-up result in the critical case with respect to \cite{PR17vs}, where the restriction $\delta\in(0,1]$ is required. Finally, we remind that \eqref{main} is ``parabolic-like'' for $\delta\geqslant (n+1)^2$. Therefore, the restriction $\delta<(n-2)^2$ when $n\geqslant 3$ is compatible with the conjecture for \eqref{main} to be ``wave-like'' for ``small'' and nonnegative $\delta$. Similarly, in the sub-critical case, even though in Theorem \ref{sub-critical} we assume $\delta\geqslant 0$, it is clear that the result is sharp only for suitably ``small'' and nonnegative $\delta$.
\end{rem}

\begin{rem} Regarding the necessity part, in the special case $\delta=1$ the exponent $p_S(n+\mu_1)$ is proved to be really critical for $n\geqslant 3$ in the radially symmetric case in \cite{Pal18odd, Pal18even}. This shows the optimality of the range for $p$ which is obtained in this paper for suitably ``small'' and nonnegative $\delta$.
\end{rem}

\paragraph*{\bf Acknowledgment}
The first author is member of the Gruppo Nazionale per L'Analisi Matematica, la Probabilit\`{a} e le loro Applicazioni (GNAMPA) of the Instituto Nazionale di Alta Matematica (INdAM). The second author is partially supported by Zhejiang Provincial Nature Science Foundation of China under Grant No. LY18A010023.

%% references info
%\newpage

\bibliographystyle{elsarticle-num}
\bibliography{References}
 
\end{document}